\newtheorem{theorem}{Theorem}
\newtheorem{lemma}[theorem]{Lemma}
\newdefinition{remark}{Remark}
\newproof{proof}{Proof}
\newtheorem{proposition}{Proposition}
\newtheorem{corollary}[theorem]{Corollary}
\newcommand{\R}{\mathbb{R}}
\begin{document}

\begin{frontmatter}

\title{Ground state of some variational problems in Hilbert spaces and applications to P.D.E.\tnoteref{t1}}

\tnotetext[t1]{This research project is implemented in the framework of H.F.R.I call
``Basic research Financing (Horizontal support of all Sciences)'' under the National Recovery and Resilience Plan ``Greece 2.0'' funded by the European Union - NextGenerationEU (H.F.R.I. Project Number: 016097).}

\author[inst1]{Ioannis Arkoudis}
\ead{jarkoudis@math.uoa.gr}
\author[inst2]{Panayotis Smyrnelis\corref{cor1}}
\ead{smpanos@math.uoa.gr}

\cortext[cor1]{Corresponding author}

\affiliation[inst1]{
  organization={Department of Mathematics, University of Athens},
  addressline={Panepistimiopolis},
  city={Athens},
  postcode={15784},
  country={Greece}
}

\affiliation[inst2]{
  organization={Department of Mathematics, University of Athens},
  addressline={Panepistimiopolis},
  city={Athens},
  postcode={15784},
  country={Greece}
}

\begin{abstract}
We prove the existence of a ground state for some variational problems in Hilbert spaces, following the approach of Berestycki and Lions. Next,  we examine the problem of constructing ground state solutions $u:\R^{d+k}\to\R^m$ of the system $\Delta u(x)=\nabla W(u(x))$ (with $W:\R^m\to \R$), corresponding to some nontrivial stable solutions $e:\R^k\to\R^m$. The method we propose is based on a reduction to a ground state problem in a space of functions $\mathcal H$, where $e$ is viewed as a local minimum of an effective potential defined in $\mathcal H$. As an application, by considering a heteroclinic orbit $e:\R\to\R^m$, we obtain nontrivial solutions $u:\R^{d+1}\to\R^m$ ($d\geq 2$), converging asymptotically to $e$, which can be seen as the homoclinic analogs of the heteroclinic double layers, initially constructed by Alama-Bronsard-Gui and Schatzman.

\end{abstract}

\begin{keyword}
ground state \sep Hilbert space \sep heteroclinic orbit \sep layered solutions
\MSC[2020] 35J50 \sep 58E99 
\end{keyword}

\end{frontmatter}

\section{Introduction}
 In the first part of the present paper we establish the existence of a ground state for some variational problems in Hilbert spaces, following the approach of Berestyscki and Lions in \cite{BerLionsI}. Their seminal paper, which studies nonlinear scalar fields in $\R^d$, with $d\geq3$, was completed by the work of  Berestycki,  Gallou{\"e}t, and Kavian \cite{BGK}, in the two dimensional case, and by the work of Brezis and Lieb \cite{brezis} in the vector case. For more recent developments on these results we refer for instance to the papers \cite{alves,clap,med1,med2,med3}.

In the Euclidean setting, it is known (cf. \cite{brezis}) that given a potential $W\in C^2(\R^m)$ (with $m\geq 1$) satisfying for instance the following set of assumptions:
\begin{subequations}\label{condwww}
 \begin{equation}\label{condw2}
 W(0)=0, \text{ and }W(u_0)<0, \text{ for some } u_0\in\R^m,
 \end{equation}
\begin{equation}
\begin{cases}
\text{when $d\geq 3$: } \liminf_{|u|\to 0} \frac{W(u)}{ |u|^{2^*}}\geq 0 \text{  and  } \lim_{|u|\to\infty} \frac{W(u)}{ |u|^{2^*}}=0,  \text{ with $2^*=\frac{2d}{d-2}$,}\\
\text{when $d=2$: $W(u)>W(0)=0$, for $0<|u|\leq \delta$, for some $\delta>0$,} \\  \  \  \  \  \  \  \  \  \  \  \  \  \  \  \  \  \  \  \text{and $\limsup_{|u|\to\infty} \frac{|\nabla W(u)|}{|u|^q}<\infty$, for some $q>0$,}
\end{cases}
 \end{equation}
 \end{subequations}
then the minimization problem:
\begin{equation}\label{minpb}
\begin{cases}
\text{when $d\geq 3$: }\min\{\frac{1}{2}\int_{\R^d}|\nabla u(x)|^2dx: \int_{\R^d}W(u(x))dx\leq -1\}\\
\text{when $d=2$: }\min\{\frac{1}{2}\int_{\R^2}|\nabla u(x)|^2dx: \int_{\R^2}W(u(x))dx\leq 0, \, u\not\equiv 0\}
\end{cases}
\end{equation}
admits a solution $u:\R^d\to\R^m$ in an appropriate class of maps. Moreover, under a convenient change of scale, the minimum of \eqref{minpb} gives rise to a \emph{ground state solution} for the problem
\begin{equation}\label{groundpb}
\Delta u(x)=\nabla W(u(x)) \text{ in } \R^d, \text{ with } u\not\equiv 0, \  \lim_{|x|\to\infty}u(x)=0.
\end{equation}
By a ground state solution, we mean a solution $u\not\equiv 0$ of \eqref{groundpb} such that  $E_d(u) \leq E_d(v)$ holds for every nontrivial solution $v$ of \eqref{groundpb} in an appropriate class, where we denote by
\begin{equation}\label{energy}
E_d(v)=\int_{\R^d}\big(\frac{1}{2}|\nabla v|^2+W(v)\big), \text{ or } E_d(v,\Omega)=\int_{\Omega}\big(\frac{1}{2}|\nabla v|^2+W(v)\big), \, \Omega\subset\R^d,
\end{equation}
 the energy functional associated to system \eqref{groundpb}.

Finally, we point out that the one dimensional case $d=1$ is special, and is treated differently. Under certain assumptions on $W$, which are distinct from \eqref{condwww} (cf. for instance \cite{antonop}), one obtains solutions $u:\R\to\R^m$ of the O.D.E. problem:
\begin{equation}\label{odepb}
u''(x)=\nabla W(u(x)) \text{ in } \R, \text{ with } u\not\equiv 0, \  \lim_{|x|\to\infty}u(x)=0.
\end{equation}
Such solutions are called homoclinic orbits.

In the setting of Hilbert space (cf. Theorems \ref{th1} and \ref{th2} in Section \ref{sec:groundH} below), we consider sequentially weakly lower semicontinuous potentials $\mathcal W$ defined in a separable Hilbert space $\mathcal H$ and taking their values in an interval $[-\mathcal W_m,+\infty]$, which is bounded below. In addition, we assume that $\mathcal W(0)=0$, and that $0$ is respectively a local minimum of $\mathcal W$, when $d\geq 3$, and a nondegenerate local minimum of $\mathcal W$, when $d=2$. Finally, we suppose that condition \eqref{condw2} is satisfied for $\mathcal W$. In Theorems \ref{th1} and  \ref{th2}, we establish that the minimization problem
\begin{equation}\label{minpb22}
\begin{cases}
\text{when $d\geq 3$: }\min\{\frac{1}{2}\int_{\R^d}\|\nabla U(y)\|_{\mathcal H^d}^2dy:  \int_{\R^d}\mathcal W(U(y))dy\leq -1\}\\
\text{when $d=2$: }\min\{\frac{1}{2}\int_{\R^2}\|\nabla U(y)\|_{\mathcal H^2}^2dy: \int_{\R^2} \mathcal W(U(y))dy\leq 0, \, U\not\equiv 0\}
\end{cases}
\end{equation}
admits a solution in an appropriate class of \emph{radial} maps $U:\R^d\to\mathcal H$ (cf. \eqref{class1bis} and  \eqref{class2bis}). We point out that the radial symmetry is not a restrictive constraint in the Euclidean setting, since it is proved in \cite{maris} that any minimizer of problem \eqref{minpb} is necessarily radial.  The main issue in the setting of Hilbert spaces, is due to the lower semicontinuity of the potential $\mathcal W$, and to the fact that the imbeddings of Sobolev spaces of functions $U:\R^d\supset\Omega\to\mathcal H$ are not compact. However, by assuming that $\mathcal W$ is bounded below, and looking for a minimizer in the class of radial maps, problem \eqref{minpb22} can be solved. Notice also that in contrast with the Euclidean case, none asymptotic condition is required on $\mathcal W$, as $\|U\|_{\mathcal H}\to\infty$.

The ground state solution $u$ in \eqref{groundpb} converges asymptotically to the trivial solution $0$ of system \eqref{groundpb}, and it is close to it in terms of energy. Thus, one may  ask if a ground state corresponding to $k\geq 1$ dimensional solutions can also be constructed. This is the scope of the second part of the present paper.

More precisely, given a solution $e:\R^k\to\R^m$ of
\begin{equation}\label{solhet}
\Delta e(x_{d+1},\ldots,x_{d+k})=\nabla W(e(x_{d+1},\ldots,x_{d+k})), \ (x_{d+1},\ldots,x_{d+k}) \in\R^k,
\end{equation}
we are looking for a solution $u:\R^{d+k}\to\R^m$ of system
\begin{subequations}\label{pbe}
\begin{equation}\label{solu}
\Delta u(x_{1},\ldots,x_{d+k})=\nabla W(u(x_{1},\ldots,x_{d+k})), \ (x_{1},\ldots,x_{d+k}) \in\R^{d+k},
\end{equation}
satisfying the following properties
\begin{equation}\label{propsol1}
u(x)-e(x_{d+1},\ldots,x_{d+k})\not\equiv 0  \text{ where } x=(x_1,\ldots, x_{d+k}),
\end{equation}
\begin{equation}
\lim_{|x|\to\infty}(u(x)-e(x_{d+1},\ldots,x_{d+k}))=0,
\end{equation}
\begin{equation}
\mathcal E(u)\leq\mathcal E(v) \text{ holds for every solution $v\neq e$ of \eqref{solu} in an appropriate class},
\end{equation}
\end{subequations}
where $\mathcal E$ is a renormalized energy functional related to functional $E_{d+k}$ (cf. \eqref{energy}) that will be defined in \eqref{renormene1}.

The strategy to solve problem \eqref{pbe} is inspired in the approach from Functional Analysis (evolution equations), where the solution $u$ of \eqref{pbe}, is regarded as a map $U:\R^d\to e+L^2(\R^k;\R^m)$, $[U(x_1,\ldots,x_d)](x_{d+1},\ldots,x_{d+k}):=u(x_1,\ldots, x_{d+k})$, taking its values in the affine space $e+L^2(\R^k;\R^m)$, and the initial P.D.E. problem \eqref{pbe} is reduced to the minimization problem \eqref{minpb22} for $U$. This method has previously been applied in \cite{ps2,doublehet} to construct heteroclinic double layers solutions of second and fourth order elliptic systems. We refer to the work of Alama, Bronsard, Gui \cite{abg}, and Schatzman \cite{scha} for the original construction of such solutions, which are particularly relevant for the theory of phase transition systems. Indeed, these results provide nontrivial examples of \emph{minimal solutions} for system \eqref{solu} with a double well potential $W$.

Below, we present the main ideas of this method. One issue is due to the fact that the energy of the solutions we are looking for is infinite. Thus, we have to renormalize the functional $E_k$. For this purpose, we identify the affine space $e+L^2(\R^k;\R^m)$ with the Hilbert space $\mathcal H=L^2(\R^k;\R^m)$, and define the effective potential  $\mathcal W:\mathcal H\to\R$ as follows:
\begin{equation}\label{defw}
\mathcal W(h)=\begin{cases}
\int_{\R^k}\big(\frac{1}{2}|\nabla h|^2+[W(e+h)-W(e)-\nabla W(e)\cdot h]\big), \ \text{ when } h\in H^1(\R^k;\R^m),\\
+\infty,  \text{ when } h\in L^2(\R^k;\R^m)\setminus H^1(\R^k;\R^m).
\end{cases}
\end{equation}
Assuming that the map $\R^m \ni u\mapsto \nabla W(u)$ is globally Lipschitz, the potential $\mathcal W$ is well defined. Moreover, we have $\mathcal W(0)=0$, and one can see that given a test function $\phi\in\mathcal D(\R^k;\R^m
)$, it holds $\mathcal W(\phi)=E_k(e+\phi, \operatorname{supp} \phi)-E_k(e,\operatorname{supp}\phi)$.

Assuming also that the solution $e$, which is identified with the origin of the Hilbert space $\mathcal H$,  is a local minimum of $\mathcal W$, then the minimization problem \eqref{minpb} admits a solution $V:\R^d\to\mathcal H$, provided that $\mathcal W$ is sequentially weakly lower semicontinuous and bounded below. Finally, after rescaling $V$, we obtain a map $\tilde V$ giving rise to  the solution $u(x_1,\ldots,x_{d+k})=e(x_{d+1},\ldots, x_{d+k})+[\tilde V(x_1,\ldots,x_d)](x_{d+1},\ldots,x_{d+k})$ of problem \eqref{pbe}. In addition, by deriving  a Pohozaev identity for $\tilde V$ (cf. Lemma \ref{hamid}), one can show that $u$ is a least-energy solution in an appropriate class, for the renormalized energy functional:
\begin{align}\label{renormene1}
\mathcal E(\tilde u)=\int_{\R^{d}}\big(\frac{1}{2}\|\nabla \tilde \Xi(y)\|^2_{\mathcal H^d}+\mathcal W(\tilde \Xi(y))\big)dy,
\end{align}
where $y=(x_1,\ldots,x_d)$, $\tilde u(x_1,\ldots,x_{d+k})=e(x_{d+1},\ldots, x_{d+k})+[\tilde \Xi(y)](x_{d+1},\ldots,x_{d+k})$, and $\tilde \Xi:\R^d\to L^2(\R^k;\R^m)$. Notice that the energy functional $\mathcal E$ decouples the partial derivatives $\frac{\partial v}{\partial x_1},\ldots,\frac{\partial v}{\partial x_{d+k}} $, appearing in $E_{d+k}$, since $$\int_{\R^{d}}\|\nabla \tilde \Xi(y)\|^2_{\mathcal H^d}dy=\int_{\R^{d+k}}|\nabla_y \tilde u(x)|^2 dx,$$ while in the expression of $\mathcal W$, the derivatives  $\frac{\partial (\tilde u-e)}{\partial x_i}$ ($i=d+1,\ldots,d+k)$, as well as the contribution of $W$ appear.

In Section \ref{sec:groundH}, we apply this method to a heteroclinic orbit $e:\R\to\R^m$ of the O.D.E. system
\begin{equation}\label{systode}
e''=\nabla W(e),
\end{equation}
with a multiple well potential $W\geq 0$ (cf. assumptions \eqref{w13}). That is, $e:\R\to\R^m$  is a solution of \eqref{systode} satisfying $\lim_{t\to\infty} e(t)=a^\pm$, with $W(a^\pm)=0$, and $a ^-\neq a^+$. Indeed, in contrast with other one dimensional solutions of  \eqref{systode} such as homoclinic and periodic orbits, there exist heteroclinic orbits which are stable. This is a necessary assumption to solve problem \eqref{pbe}, in Theorems \ref{th1ground} and \ref{th2ground} below. We also point out that in the two dimensional case $d=2$, we need an additional symmetry assumption\footnote{This assumption was used in the first construction of heteroclinic double layers (cf. \cite{abg}), and also in \cite{alessio}  to establish the existence of an infinity of periodic layered solutions.} on $W$ (cf. \eqref{w1ter}), to ensure that $e$ is an isolated minimum of $\mathcal W$.
Otherwise, $\mathcal W$ vanishes  along the curve $\{e^T: e^T(t):=e(t-T),  \forall T\in\R\}$
obtained by translating the heteroclinic orbit $e$, and the minimization problem \eqref{minpb22} does not have  anymore (when $d=2$) a solution distinct from $0$.

We recall that given a double well potential $W:\R^m\to[0,\infty)$, having (up to translations) exactly two minimizing heteroclinic orbits $e^\pm$ (i.e. $e^\pm$ are minimizers of $E_1$ in the affine space $e^\pm+H^1(\R;\R^m)$), the heteroclinic double layers is a solution $u:\R^2\to\R^m$ of system
\begin{equation}\label{system0}
\Delta u=\nabla W(u),
\end{equation}
such that
\begin{equation}\label{lay1}
\lim_{x_1\to\pm\infty}u(x_1,x_2)=e^\pm(x_2-m^\pm), \text{ for some constants $m^\pm\in\R$}.
\end{equation}
It is obtained in \cite{ps2} as a \emph{heteroclinic orbit} $U$, with $[U(x_1)](x_2)=u(x_1,x_2)$, connecting in an appropriate space of functions $\mathcal H$ , the orbits $e^\pm$, which are regarded as the minima of an effective potential $\mathcal W$ defined in $\mathcal H$.

On the other hand, given a double well potential $W:\R^m\to[0,\infty)$, we consider in Theorems \ref{th1ground} and \ref{th2ground}, a heteroclinic orbit $e$ which is a local (but not global) minimum of $E_1$ in the affine space $e+H^1(\R;\R^m)$. Then, setting $y=(x_1,\ldots,x_d)$, we obtain a nontrivial solution $u:\R^{d+1}\to\R^m$ of system \eqref{system0} such that
 \begin{equation}\label{lay2}
\lim_{|y|\to\pm\infty}u(y,x_{d+1})=e(x_{d+1}).
\end{equation}
This is now a solution of \emph{homoclinic} type converging as $|y|\to\infty$ to the orbit $e$, which is a local minimum of the potential $\mathcal W$ defined in \eqref{defw}. Thus, Theorems \ref{th1ground} and \ref{th2ground} provide a new relevant example of layered solution, that can be seen as the homoclinic analog of the heteroclinic double layers solution.

These results also outline the hierarchical structure of solutions of system \eqref{system0}, since higher dimensional solutions such as those in \eqref{lay1} or \eqref{lay2}, reproduce the structure of lower dimensional solutions (e.g. the heteroclinic in \eqref{systode} or the ground state solution in \eqref{groundpb}) in an appropriate space of functions $\mathcal H$. Similarly, there is a perfect  analogy between the potential $W$ defined in $\R^m$, and the effective potential $\mathcal W$ defined in $\mathcal H$. This is clear if we compare the minimization problems \eqref{minpb} and \eqref{minpb22}: the local minimum $0$ of $W$ has its counterpart in the heteroclinic $e$, which is a local  minimum of $\mathcal W$, while the $d$-dimensional solution of \eqref{groundpb}, has its counterpart in the  the $d+1$-dimensional solution provided by Theorems \ref{th1ground} and \ref{th2ground}. For additional examples of such layered solutions, we refer for instance to the constructions in \cite{clerc,filament}.

Finally, we point out that provided that the solution $e$ in \eqref{solhet} is a local minimum of $E_k$ (or its renormalization), other applications of our method can be considered to construct the corresponding ground state solutions. We shall give some more examples in our subsequent work. In addition, by working in the affine space $e+H^1(\R^k;\R^m)$ (instead of $e+L^2(\R^k;\R^m)$), ground state solutions can be obtained for fourth order P.D.E. systems. For more details, we refer to \cite[Theorem 1.3]{ps2} and \cite{doublehet}.

\section{Existence of a ground state in Hilbert spaces}\label{sec:groundH}

In this section, we solve the minimization problem \eqref{minpb22} under appropriate conditions on the potential $\mathcal W$, which differ for $d \geq 3$ and $d=2$.

\subsection{The case of three or more dimensions in the domain}

\

Let $\mathcal H$ be a separable Hilbert space. We 
consider a potential $\mathcal W$ satisfying the following assumptions:
\begin{itemize}
 \item[(\textbf{H1})] $\mathcal W:\mathcal H\to [-\mathcal W_m,+\infty]$ is a sequentially weakly lower semicontinuous function such that  $\inf_{\mathcal H}\mathcal W=-\mathcal W_m\in(-\infty,0)$.
 \item[(\textbf{H2})] $\mathcal W(U)\geq 0$, if $\| U\|_{\mathcal H}\leq \delta$ (with $\delta>0$).  Consequently, it also holds $\mathcal W^-(U)\leq M\|U\|^{2^*}_{\mathcal H}$,  $\forall U\in\mathcal H$, with $2^*=\frac{2d}{d-2}$, $d\geq 3$, and a constant $M>0$.
 \item[(\textbf{H3})] $\mathcal W(0)=0$, and there exists $U_0$ such that $\mathcal W(U_0)<0$.
\end{itemize}
That is, the potential $\mathcal W$ which is bounded from below, has a local minimum equal to $0$ at $U=0$, and is negative at some point $U_0$.

When the domain $\R^d$ is of dimension $d\geq 3$, we define the space
$$\mathcal D^{1,2}_{rad}(\R^d;\mathcal H)=\{ U \in  L^{2^*}(\R^d;\mathcal H): \nabla U \in L^2(\R^d;\mathcal H^d), U=U\circ g, \forall g \in O(\R^d)\},$$
with $2^*=\frac{2d}{d-2}$ (cf. \cite{kreuter} for the general theory of Sobolev spaces of vector valued functions, and \cite[Chapter 1]{willem} for the definition and properties of $\mathcal D^{1,2}(\R^d)$),
as well as the class
\begin{equation}\label{class1}
\mathcal A=\left\lbrace U\in\mathcal D^{1,2}_{rad}(\R^d;\mathcal H) ,   \ \mathcal W(U)\in L^1(\R^d), \ \ \int_{\R^d}\mathcal W(U)\leq -1 \right\rbrace,
\end{equation}
which is nonempty if for instance we assume that
\begin{itemize}
 \item[(\textbf{H4})] $\sup_{t\in[0,1]} |\mathcal W(tU_0)|<\infty$.
\end{itemize}
We point out that the Sobolev inequality holds in the setting of Hilbert spaces. It is proved in \cite[Theorem 4.15]{kreuter} for $W^{1,2}(\R^d;\mathcal H)$, and it can be extended to $\mathcal D^{1,2}_{rad}(\R^d;\mathcal H)$ by using \cite[Remark 3]{brezis}. Thus, we have
$$\|U\|_{L^{2^*}(\R^d;\mathcal H)}\leq\|\nabla U\|_{L^2(\R^d;\mathcal H^d)}, \ \forall U\in\mathcal D^{1,2}_{rad}(\R^d;\mathcal H).$$
In the theorem below, we prove the existence of a minimizer for the problem
\begin{align}\label{minpb1}
T=\inf\left\lbrace \int_{\R^d}\dfrac{1}{2}\|\nabla U(y)\|_{\mathcal H^d}^2\,  dy:U\in\mathcal A \right\rbrace.
\end{align}

\begin{theorem}\label{th1}
Under assumptions (\textbf{H1})--(\textbf{H3}), if $\mathcal{A}\neq\emptyset$, there exists $V\in\mathcal{A}$, such that  $$\int_{\R^d}\dfrac{1}{2}\|\nabla V\|^2_{\mathcal{H}^d}=T, \  V \not \equiv 0,  \text{ and }  \int_{\R^d}\mathcal{W}(V)=-1.$$
\end{theorem}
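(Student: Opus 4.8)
The plan is to run the direct method of the calculus of variations, adapting the Berestycki--Lions scheme of \cite{BerLionsI,brezis} to the present setting; the two genuinely new points are the passage to the limit in the constraint $\int_{\R^d}\mathcal W(U)\le -1$ in the absence of compact Sobolev imbeddings, and the change of sign of $\mathcal W$. First I would pick a minimizing sequence $(U_n)\subset\mathcal A$ for \eqref{minpb1}, so that $\int_{\R^d}\tfrac12\|\nabla U_n\|_{\mathcal H^d}^2\to T$ and $\int_{\R^d}\mathcal W(U_n)\le -1$. The Sobolev inequality recalled above gives $\|U_n\|_{L^{2^*}(\R^d;\mathcal H)}\le\|\nabla U_n\|_{L^2(\R^d;\mathcal H^d)}$, so $(U_n)$ is bounded in $\mathcal D^{1,2}_{rad}(\R^d;\mathcal H)$; up to a subsequence, $\nabla U_n\rightharpoonup\nabla V$ in $L^2(\R^d;\mathcal H^d)$ and $U_n\rightharpoonup V$ in $L^{2^*}(\R^d;\mathcal H)$ for some $V$. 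Since the symmetry constraint $U=U\circ g$ is linear and closed, $V\in\mathcal D^{1,2}_{rad}(\R^d;\mathcal H)$, and by weak lower semicontinuity of the norm, $\int_{\R^d}\tfrac12\|\nabla V\|_{\mathcal H^d}^2\le T$ and $\|\nabla V\|_{L^2}^2\le 2T$.

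Next I would upgrade weak $L^{2^*}$-convergence to pointwise weak convergence in $\mathcal H$, which is where radiality is essential. Writing $U_n(y)=u_n(|y|)$ with $u_n:(0,\infty)\to\mathcal H$, on each interval $[a,b]\subset(0,\infty)$ one has $\int_a^b(\|u_n\|_{\mathcal H}^2+\|u_n'\|_{\mathcal H}^2)\,dr\le C(a,b)\,(\|U_n\|_{L^{2^*}}^{2^*}+\|\nabla U_n\|_{L^2}^2)$, so $(u_n)$ is bounded in $H^1_{loc}((0,\infty);\mathcal H)$; a diagonal extraction over $[1/j,j]$ gives $u_n\rightharpoonup u$ in $H^1([1/j,j];\mathcal H)$ for all $j$, with $u$ the radial profile of $V$. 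Since $H^1(I;\mathcal H)$ imbeds continuously into $C(I;\mathcal H)$, each evaluation $v\mapsto\langle v(r),h\rangle_{\mathcal H}$ is weakly continuous, whence $U_n(y)\rightharpoonup V(y)$ in $\mathcal H$ for every $y\neq 0$, i.e. for a.e.\ $y$. Combined with the (strong, hence weak) sequential lower semicontinuity of $\mathcal W$, this yields $\mathcal W(V(y))\le\liminf_n\mathcal W(U_n(y))$ for a.e.\ $y$; note also that $\mathcal W$, being weakly sequentially lsc, is strongly lsc and thus Borel, so all integrands below are measurable.

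It remains to pass to the limit in the constraint, for which I would first confine the negative part of $\mathcal W(U_n(\cdot))$ to a fixed ball. Applying the scalar Strauss radial estimate to $\|U_n(\cdot)\|_{\mathcal H}\in\mathcal D^{1,2}_{rad}(\R^d)$ (using $|\nabla\|U_n\|_{\mathcal H}|\le\|\nabla U_n\|_{\mathcal H^d}$ a.e., cf.\ \cite[Chapter~1]{willem}) gives $\|U_n(y)\|_{\mathcal H}\le C|y|^{-(d-2)/2}\|\nabla U_n\|_{L^2}$, hence there is $R_0>0$ independent of $n$ with $\|U_n(y)\|_{\mathcal H}\le\delta$ and $\|V(y)\|_{\mathcal H}\le\delta$ for $|y|\ge R_0$; by (H2), $\mathcal W(U_n)$ and $\mathcal W(V)$ are $\ge 0$ outside $B_{R_0}$, and by (H1) they are $\ge-\mathcal W_m$ everywhere. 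Thus $\mathcal W(U_n)+\mathcal W_m\mathbf 1_{B_{R_0}}\ge 0$ and $\liminf_n(\mathcal W(U_n)+\mathcal W_m\mathbf 1_{B_{R_0}})\ge\mathcal W(V)+\mathcal W_m\mathbf 1_{B_{R_0}}$ a.e., so Fatou's lemma gives
\[
\int_{\R^d}\big(\mathcal W(V)+\mathcal W_m\mathbf 1_{B_{R_0}}\big)\le\liminf_n\int_{\R^d}\big(\mathcal W(U_n)+\mathcal W_m\mathbf 1_{B_{R_0}}\big)\le -1+\mathcal W_m|B_{R_0}|,
\]
whence $\mathcal W(V)\in L^1(\R^d)$ and $\int_{\R^d}\mathcal W(V)\le -1$, i.e.\ $V\in\mathcal A$. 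Consequently $\int_{\R^d}\tfrac12\|\nabla V\|_{\mathcal H^d}^2\ge T$, which with the reverse inequality forces equality. Finally $V\not\equiv 0$, since otherwise $\mathcal W(V)\equiv\mathcal W(0)=0$ by (H3), contradicting $V\in\mathcal A$; and $\int_{\R^d}\mathcal W(V)=-1$, since if $\int_{\R^d}\mathcal W(V)=-\mu<-1$ then $V_\sigma(y):=V(y/\sigma)$ with $\sigma=\mu^{-1/d}<1$ lies in $\mathcal A$ with $\int_{\R^d}\tfrac12\|\nabla V_\sigma\|_{\mathcal H^d}^2=\mu^{-(d-2)/d}\,T<T$, contradicting the definition of $T$. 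The main obstacle is exactly the pointwise passage to the limit in the constraint: obtaining a.e.\ weak convergence in $\mathcal H$ through the radial reduction, and making Fatou's lemma effective in spite of the sign change of $\mathcal W$ by exploiting the uniform confinement of its negative part to a fixed ball.
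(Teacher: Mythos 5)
Your proof is correct and follows the same overall Berestycki--Lions scheme as the paper, but with a genuinely different argument for the key technical step of obtaining a.e.\ pointwise weak convergence $U_n(y)\rightharpoonup V(y)$ in $\mathcal H$. The paper proceeds via the metrizability of the weak topology on bounded balls of $\mathcal H$ together with Ascoli--Arzel\`a (applied to the radial profiles via a diagonal argument), and then must separately verify that the Ascoli limit coincides with the $L^{2^*}$ weak limit $V$ by integrating from a base point and using the decay at infinity. You instead observe that the vector-valued embedding $H^1(I;\mathcal H)\hookrightarrow C(I;\mathcal H)$ renders every evaluation $v\mapsto\langle v(r),h\rangle_{\mathcal H}$ a bounded linear functional, so that weak convergence in $H^1_{loc}((0,\infty);\mathcal H)$ of the radial profiles immediately gives pointwise weak convergence; this is cleaner and avoids the compact-metrizable-ball machinery entirely, though you should say a word about why the $H^1_{loc}$ weak limit $u$ agrees with the radial profile of $V$ (uniqueness of weak limits, after noting that weak $L^{2^*}$ convergence on finite-measure annuli implies weak $L^2$ convergence there). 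You also streamline the Fatou step by adding $\mathcal W_m\mathbf 1_{B_{R_0}}$ to obtain a nonnegative integrand in one shot, whereas the paper splits the integral over $B_\rho$ and its complement; these are equivalent. One small slip: the parenthetical ``(strong, hence weak) sequential lower semicontinuity'' reverses the correct implication (weak sequential lsc implies strong lsc, not conversely); the argument you actually run correctly uses the weak sequential lower semicontinuity supplied by (H1), so this is only a wording error, not a gap.
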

\begin{proof} Let $U_j$ be a minimizing sequence i.e. $U_j \in \mathcal A $ is such that  $$\lim_{j\to\infty}\dfrac{1}{2}\int_{\R^d}\|\nabla U_j\|^2_{\mathcal H^d} =T.$$ It is clear that $\|U_j\|_{L^{2^*}(\R^d;\mathcal H)}$ and $\|\nabla U_j\|_{L^2(\R^d;\mathcal H^d)}$ are uniformly bounded. Since $\|U_j\|_{\mathcal H}$ is a radial scalar function belonging to $\mathcal D^{1,2}_{rad}(\R^d)$, and $|\nabla\|U_j\|_{\mathcal H}|\leq\|\nabla U_j\|_{\mathcal H^d}$, one can apply the radial lemma A.III  in \cite{BerLionsI}, to obtain the bound
\begin{equation*}\label{abound}
\|U_j(y)\|_{\mathcal H}\leq C_d \|\nabla U_j\|_{L^2(\R^d;\mathcal H^d)}|y|^{\frac{2-d}{2}}, \text{ for a.e. $y\neq 0$, where $C_d>0$, only depends on $d$}.
\end{equation*}
In particular
\begin{equation}\label{bbound}
\|U_j(y)\|_{\mathcal H}\leq C |y|^{\frac{2-d}{2}}, \text{ for a.e. $y\neq 0$, and for a constant $C>0$, uniformly in $j$}.
\end{equation}
 As a consequence, setting $\rho=(\frac{C}{\delta})^{\frac{2}{d-2}}$, we have $\|U_j(y)\|_{\mathcal H}\leq \delta$ for $|y|\geq \rho>0$, uniformly in $j$.

From the previous uniform bounds, we deduce that up to subsequence:
\begin{itemize}
\item $\nabla U_j \rightharpoonup \nabla V$ in $L^2(\R^d;\mathcal H^d)$, and $\|\nabla V\|^2_{L^2(\R^d;\mathcal H^d)}\leq\liminf_{j\to\infty}\|\nabla U_j\|^2_{L^2(\R^d;\mathcal H^d)}=T$.
\item $U_j\rightharpoonup V$ in $L^{2^*}(\R^d;\mathcal H)$, and $V$ is radial.
\item $U_j(y) \rightharpoonup V(y)$, for a.e. $ y \in \R^d$.
\end{itemize}
The almost everywhere weak convergence $U_j(y) \rightharpoonup V(y)$, follows from the fact that the closed balls $B_R=\{U\in \mathcal H: \|U\|_{\mathcal H}\leq R\}$ are compact and metrizable in the weak topology, with a metric $d$ satisfying $d(U,\tilde U)\leq R\|U-\tilde U\|_{\mathcal H}$ (cf. for instance \cite[Theorem 3.29]{brez}). Moreover, the radial symmetry of $U_j$ and the bound \eqref{bbound} imply that the maps $U_j(r)$, viewed as functions of $r=|y|$, are uniformly bounded in $H^{1}([a,b]);\mathcal H)$, for every $0<a<b<\infty$.  Consequently, the sequence $U_j(r)$ is equibounded and equicontinuous on $[a,b]$, both with respect to the norm $\|\cdot\|_{\mathcal H}$, and to the distance $d$ induced by the weak topology. In view of the theorem of  Ascoli applied to the sequence $U_j(r)$ via a diagonal argument, we deduce that
$U_j(r) \rightharpoonup U(r)$, for every $ r>0$, and for a function $U$ satisfying  $\|U(r)\|_{\mathcal H}\leq C r^{\frac{2-d}{2}}$. Finally, given $r_0>0$, we have
$U_j(r)-U_j(r_0)=\int_{r_0}^r U'_j(s)ds$, and letting $j\to\infty$ we get $U(r)-U(r_0)=\int_{r_0}^r V'(s)ds$, $\forall r>0$. This proves that $U'=V'$. To conclude that $U=V$, we notice that $\lim_{r\to \infty}U(r)=\lim_{r\to\infty}V(r)=0$, since also $V$ satisfies \eqref{bbound}.

At this stage, one can see that the inequality $ \mathcal W^-(U_j)\leq M \|U_j\|^{2^*}_{\mathcal H}$, implies that the integrals
$\int_{\R^d }\mathcal W^-(U_j) $ are uniformly bounded. On the other hand, since  $\int_{\R^d} \mathcal W^+(U_j)=\int_{\R^d} \mathcal W^-(U_j)+ \int_{\R^d} \mathcal W(U_j) $, and $\int_{\R^d}\mathcal W(U_j)\leq -1$, we conclude that $\int_{\R^d}\mathcal W^+(U_j)$ as well as $\int_{\R^d}|\mathcal W(U_j)|$ are uniformly bounded.

Finally, using Fatou's lemma and the sequentially weakly lower semicontinuity of $\mathcal W$, we obtain
$$\int_{\R^d\setminus B_\rho}\mathcal W(V) \leq \int_{\R^d\setminus B_\rho}\liminf_{j\to\infty}\mathcal W(U_j)\leq \liminf_{j\to\infty}\int_{\R^d\setminus B_\rho}\mathcal W(U_j),$$
and in particular $\int_{\R^d\setminus B_\rho}|\mathcal W(V(y))|dy<\infty$, since $\|V(y)\|_{\mathcal H}\leq\delta$, for $|y|\geq\rho$.
Moreover, in the ball $B_\rho$, we have again by Fatou's lemma
$$\int_{ B_\rho}(\mathcal W(V)+\mathcal W_m)\leq \int_{ B_\rho}\liminf_{j\to\infty}(\mathcal W(U_j)+\mathcal W_m)\leq \liminf_{j\to\infty}\int_{ B_\rho}(\mathcal W(U_j)+\mathcal W_m),$$
that is, $$\int_{ B_\rho}\mathcal W(V) \leq \liminf_{j\to\infty}\int_{ B_\rho}\mathcal W(U_j),$$ and in particular
 $\int_{B_\rho}|\mathcal W(V)|<\infty$. This proves that $\mathcal W(V)\in L^1(\R^d)$, and
 $$\int_{ \R^d}\mathcal W(V) \leq \liminf_{j\to\infty}\int_{\R^d}\mathcal W(U_j)\leq -1.$$
So far we established that $V\in\mathcal A$, and $T=\dfrac{1}{2}\int_{\R^d}\|\nabla V\|_{\mathcal H^d}^2$. Arguing as in \cite[Lemma 2.3]{brezis}, one can easily see by scaling (i.e. $U(y)\to U(\lambda y)$) that
\begin{equation}\label{eqsca}
\frac{1}{2}\int_{\R^d}\|\nabla U\|^2_{\mathcal H^d}\geq T \Big(-\int_{\R^d}\mathcal W(U)\Big)^{\frac{d-2}{d}},
\end{equation}
holds for every $U \in \mathcal D^{1,2}_{rad}(\R^d;\mathcal H)$, such that $\mathcal W(U)\in L^1(\R^d)$, and $\int_{\R^d}\mathcal W(U)<0$. Therefore, the minimizer $V$ satisfies $\int_{\R^d}\mathcal W(V)=-1$, and in particular $V\not\equiv 0$. \qed

\end{proof}

\subsection{The two dimensional case}

\

Let $\mathcal H$ be a separable Hilbert space. We consider a potential $\mathcal W$ satisfying the following assumptions:
\begin{itemize}
 \item[(\textbf{H1})] $\mathcal W:\mathcal H\to [-\mathcal W_m,+\infty]$ is a sequentially weakly lower semicontinuous function such that  $\inf_{\mathcal H}\mathcal W=-\mathcal W_m\in(-\infty,0)$.
 \item[(\textbf{H2})] $\mathcal W(U)\geq \alpha\|U\|^2_{\mathcal H}$, for $\|U\|_{\mathcal H}\leq \delta$, and for constants $\alpha,\delta>0$.   Consequently, it also holds $\mathcal W^-(U)\leq M\|U\|^{2}_{\mathcal H}$,  $\forall U\in\mathcal H$,  and for a constant $M>0$.
 \item[(\textbf{H3})] $\mathcal W(0)=0$, and there exists $U_0$ such that $\mathcal W(U_0)<0$.
\end{itemize}

In dimension two, we work with the space
$$ H^{1}_{rad}(\R^2;\mathcal H)=\{ U \in H^{1}(\R^2;\mathcal H) : U=U\circ g, \forall g \in O(\R^2)\},$$
and the class
\begin{equation}\label{class2}
\mathcal A=\left\lbrace U\in H^{1}_{rad}(\R^2;\mathcal H) :   U\neq 0, \ \mathcal W(U)\in L^1(\R^2), \ \ \int_{\R^2}\mathcal W(U)\leq 0 \right\rbrace,
\end{equation}
which is nonempty if for instance assumption (\textbf{H4}) holds.

We study the following minimization problem:
\begin{equation}\label{minpb2}
T=\inf\left\lbrace \int_{\R^2}\dfrac{1}{2}\|\nabla U(y)\|^2_{\mathcal H^2}\, dy : U\in\mathcal A \right\rbrace
\end{equation}

\begin{theorem}\label{th2}
Under assumptions (\textbf{H1})--(\textbf{H3}), if $\mathcal A\neq\emptyset$, there exists $V\in\mathcal A$, such that  $$\int_{\R^2}\dfrac{1}{2}\|\nabla V\|^2_{\mathcal H^2}=T,  \text{ and }  \int_{\R^2}\mathcal W(V)=0.$$
\end{theorem}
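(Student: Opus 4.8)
The plan is to mimic the proof of Theorem \ref{th1}, adapting it to the two–dimensional setting where the Sobolev inequality is unavailable and the constraint is only $\int_{\R^2}\mathcal W(U)\le 0$ together with $U\not\equiv 0$. First I would take a minimizing sequence $U_j\in\mathcal A$, so that $\|\nabla U_j\|_{L^2(\R^2;\mathcal H^2)}$ is uniformly bounded. The crucial difference from the case $d\ge3$ is that a bound on $\|\nabla U_j\|_{L^2}$ alone does not control $\|U_j\|_{L^2}$; one must rule out the sequence ``escaping to infinity'' in scale. The standard device (as in \cite{BGK} and \cite{brezis}) is to exploit the scaling invariance: replacing $U(y)$ by $U(\lambda y)$ leaves $\int_{\R^2}\|\nabla U\|_{\mathcal H^2}^2$ and the sign of $\int_{\R^2}\mathcal W(U)$ unchanged, so after a dilation I may normalize the minimizing sequence, for instance by fixing $\int_{B_1}\mathcal W^-(U_j)$ to equal a suitable positive constant, or by normalizing $\|U_j\|_{L^2(B_1;\mathcal H)}$; the point is to pin down a length scale and prevent vanishing. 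Assumption (\textbf{H2}) in its quadratic form $\mathcal W(U)\ge\alpha\|U\|_{\mathcal H}^2$ for $\|U\|_{\mathcal H}\le\delta$, together with $\int_{\R^2}\mathcal W(U_j)\le 0$, then forces the set where $\|U_j\|_{\mathcal H}>\delta$ to have positive measure bounded below, and the $L^2$–gradient bound converts this into a genuine $H^1$ bound on (a dilated) $U_j$.

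Next I would extract, up to a subsequence, the limit $V$ exactly as in Theorem \ref{th1}: $\nabla U_j\rightharpoonup\nabla V$ in $L^2(\R^2;\mathcal H^2)$ with lower semicontinuity of the Dirichlet energy, $U_j\rightharpoonup V$ in $H^1_{rad}$, and the a.e.\ weak convergence $U_j(y)\rightharpoonup V(y)$ using metrizability and compactness of weakly closed balls in $\mathcal H$. As before, the radial lemma (the two–dimensional analogue, cf.\ \cite{BerLionsI}) gives a decay estimate $\|U_j(y)\|_{\mathcal H}\to 0$ as $|y|\to\infty$ uniformly in $j$ away from the origin, so that $\|U_j(y)\|_{\mathcal H}\le\delta$ for $|y|\ge\rho$ uniformly, and on $\R^2\setminus B_\rho$ the quadratic lower bound from (\textbf{H2}) controls $\mathcal W^-(U_j)$ by an integrable majorant. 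Combining $\int_{\R^2}\mathcal W(U_j)\le 0$ with the uniform bound on $\int_{\R^2}\mathcal W^-(U_j)$ yields uniform bounds on $\int_{\R^2}\mathcal W^+(U_j)$ and $\int_{\R^2}|\mathcal W(U_j)|$. Then Fatou's lemma, applied separately on $B_\rho$ (after adding the constant $\mathcal W_m$ to make the integrand nonnegative, using (\textbf{H1})) and on $\R^2\setminus B_\rho$ (after using the integrable bound on $\mathcal W^-$), together with sequential weak lower semicontinuity of $\mathcal W$, gives $\mathcal W(V)\in L^1(\R^2)$ and $\int_{\R^2}\mathcal W(V)\le\liminf_j\int_{\R^2}\mathcal W(U_j)\le 0$.

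It then remains to show $V\not\equiv 0$, i.e.\ that $V\in\mathcal A$ and not merely $V\equiv 0$. This is where the normalization chosen at the start pays off: the uniform positive lower bound on the measure of $\{\|U_j\|_{\mathcal H}>\delta\}$ (or the normalization of $\|U_j\|_{L^2(B_1;\mathcal H)}$) must be shown to pass to the limit, using the locally improved convergence of $U_j$ to $V$ on compact annuli — in fact the Ascoli/diagonal argument of Theorem \ref{th1} shows $U_j(r)\rightharpoonup V(r)$ for every $r>0$ as maps into $\mathcal H$, and on a fixed annulus this can be upgraded (via the $H^1$ bound in the radial variable and Rellich in one dimension on the scalar quantity $\|U_j(r)\|_{\mathcal H}$, or rather using the weak convergence directly) to conclude that $V$ inherits the nonvanishing. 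Finally, a scaling argument analogous to \eqref{eqsca} — here using the two–dimensional scaling $U(y)\mapsto U(\lambda y)$, under which the Dirichlet energy is scale–invariant while $\int\mathcal W(U(\lambda\cdot))=\lambda^{-2}\int\mathcal W(U)$ — shows that if $\int_{\R^2}\mathcal W(V)<0$ one could dilate to strictly decrease the energy while staying in $\mathcal A$, contradicting minimality; hence $\int_{\R^2}\mathcal W(V)=0$, and $\frac12\int_{\R^2}\|\nabla V\|_{\mathcal H^2}^2=T$. The main obstacle is precisely the first step: because the $d=2$ problem has no compact Sobolev embedding and the Dirichlet energy is scale invariant, one must carefully choose a scale normalization for the minimizing sequence and then prove that this normalization survives in the weak limit, ensuring $V\neq 0$; everything else is a routine adaptation of the proof of Theorem \ref{th1}.
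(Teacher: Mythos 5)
Your overall scaffolding (minimizing sequence, weak extraction, a.e.\ weak pointwise convergence via metrizability of weak balls, radial decay lemma, Fatou on $B_\rho$ and its complement) matches the paper's structure, but the two steps that are genuinely new in $d=2$ are both incorrect or incomplete in your proposal.

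The final step is the more serious problem. You write that if $\int_{\R^2}\mathcal W(V)<0$ then ``one could dilate to strictly decrease the energy while staying in $\mathcal A$, contradicting minimality.'' But as you yourself observe, in $d=2$ the dilation $U(y)\mapsto U(\lambda y)$ leaves $\int_{\R^2}\|\nabla U\|_{\mathcal H^2}^2$ unchanged. So dilating $V$ does not decrease the Dirichlet energy, and no contradiction results; this is precisely why the two--dimensional case is delicate and why there is no analogue of \eqref{eqsca}. The paper instead constructs a piecewise radial reparametrization $V_\kappa$ that compresses the profile on $[0,\kappa r]$ and stretches it on $[\kappa r,s]$ (for fixed $0<r<s$), checks that for $\kappa$ near $1$ one still has $\int_{\R^2}\mathcal W(V_\kappa)\le 0$ (this uses $\int_{\R^2}\mathcal W(V)<0$ as a strict margin), and then differentiates the Dirichlet energy of $V_\kappa$ at $\kappa=1$. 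This yields $f'(1)=\frac{\pi rs}{s-r}\int_r^s\|V'\|_{\mathcal H}^2=0$, hence $V'\equiv 0$ on $[r,s]$, and letting $r,s$ vary gives $V\equiv 0$, a contradiction. A global dilation cannot detect this; you need a compactly supported radial deformation.

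The nonvanishing step is also underspecified. You propose to normalize $\|U_j\|_{L^2(B_1;\mathcal H)}$ or $\int_{B_1}\mathcal W^-(U_j)$ and then to ``pass the normalization to the limit'' using locally improved convergence. But the convergence available is only $U_j(y)\rightharpoonup V(y)$ weakly in $\mathcal H$ for a.e.\ $y$, and weak convergence does not preserve norms, so $\|U_j\|_{L^2(B_1;\mathcal H)}=1$ does not survive in the limit, and neither does a lower bound on $|\{\|U_j\|_{\mathcal H}>\delta\}|$. The paper's normalization is $\int_{\{\|U_j\|_{\mathcal H}\le\delta\}}\|U_j\|_{\mathcal H}^2=1$, chosen precisely because it interacts with the quadratic lower bound in (\textbf{H2}). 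One then introduces the truncated potential $\widetilde{\mathcal W}$ (equal to $0$ on the ball $\|U\|_{\mathcal H}\le\delta$ and to $\mathcal W$ outside), which is still sequentially weakly lower semicontinuous, and applies Fatou to $\widetilde{\mathcal W}(U_j)$. Since $\int_{\R^2}\widetilde{\mathcal W}(U_j)\le\int_{\R^2}\mathcal W(U_j)-\alpha\int_{\{\|U_j\|_{\mathcal H}\le\delta\}}\|U_j\|_{\mathcal H}^2\le-\alpha$, one obtains $\int_{\R^2}\widetilde{\mathcal W}(V)\le-\alpha<0$, which is impossible if $V\equiv 0$. This circumvents the loss of norm under weak convergence, which your proposed argument does not address.
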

\begin{proof} Let $U_j$ be a minimizing sequence i.e. $U_j \in \mathcal A $ is such that  $$\lim_{j\to\infty}\dfrac{1}{2}\int_{\R^d}\|\nabla U_j\|^2_{\mathcal H^2} =T.$$ It is clear that $\|\nabla U_j\|_{L^2(\R^2;\mathcal H^2)}$ is uniformly bounded, and that $\int_{\|U_j\|_{\mathcal H}\leq\delta}\|U_j\|^2_{\mathcal H}>0$, since $\|U_j\|_{\mathcal H} \in H^1(\R^2)\setminus\{0\}$.
By rescaling, we may assume that $\int_{\|U_j\|_{\mathcal H}\leq\delta}\|U_j\|^2_{\mathcal H}=1$.

We first prove the following version of \cite[Radial lemma A.II]{BerLionsI} that will be applied to the radial scalar functions $\|U_j\|_{\mathcal H}\in H^1(\R^2)$.
\begin{lemma}
Every radial function $u \in H^{1}(\R^2)$ is almost
everywhere equal to a function $u(r)$ of the radius $r=|x|$, continuous for $r>0$, and such that
\begin{equation}
|u(r)|\leq \frac{1}{ \sqrt{2\pi r}} \Big(\int_{\{y\in\R^2: |y|>r\} } (|u(y)|^2+|\nabla u(y)|^2) dy\Big)^{\frac{1}{2}}, \ \forall r>0.
\end{equation}
\end{lemma}
\begin{proof}
By a standard density argument, it suffices to consider the case $u\in\mathcal D(\R^2)$. We have
$$\frac{d}{dr}(ru^2)=u^2+
2r u u'\geq -r(|u|^2+|u'|^2)\Rightarrow r u^2(r)\leq \int_r^\infty s(|u(s)|^2+|u'(s)|^2)ds.$$
Thus, $r u^2(r)\leq \frac{1}{2\pi}\int_{|y|>r} (|u(y)|^2+|\nabla u(y)|^2) dy$. \qed
\end{proof}
Next, we notice that assumption (\textbf{H2}) and the inequality $\int_{\R^2}\mathcal W(U_j)\leq 0$, imply that $U_j(\R^2)$ is not contained in the ball $B(0,\delta)$ of $\mathcal H$. Thus, for every $j$, we can define $$r_j=\max\{r>0: \|U_j(r)\|_{\mathcal H}\geq \delta\}.$$
Consequently, we have for a.e. $y\in\R^2$ such that $|y|\geq r_j$ :
$$\|U_j(y)\|_{\mathcal H}\leq  \frac{1}{ \sqrt{2\pi|y|}} \Big(\int_{\|U_j\|_{\mathcal H}\leq\delta} (\|U_j\|_{\mathcal H}^2+\|\nabla U_j\|^2_{\mathcal H^2}) \Big)^{\frac{1}{2}}\leq  \frac{\sqrt{T+1}}{ \sqrt{\pi|y|}}, $$
provided that $j$ is large enough. In particular, we obtain that $r_j  \leq  \frac{T+1}{ \pi \delta^2}$, hence the inequality $\|U_j(y)\|_{\mathcal H}\geq \delta$ may occur only for $|y| \leq \rho:= \frac{T+1}{ \pi \delta^2}$. Finally, since $(\|U_j\|_{\mathcal H}-\delta)^+$ vanishes for
 $|y| \geq  \frac{T+1}{ \pi \delta^2}$, it follows from the Poincar\'{e} inequality that $(\|U_j\|_{\mathcal H}-\delta)^+$ is uniformly bounded in $L^2(\R^2)$. Therefore, we can see that $U_j$ is also uniformly bounded in $L^2(\R^2;\mathcal H)$.

From the previous uniform bounds, we deduce that up to subsequence:
\begin{itemize}
\item $\nabla U_j \rightharpoonup \nabla V$ in $L^2(\R^2,\mathcal H^2)$, and $\|\nabla V\|^2_{L^2(\R^2,\mathcal H^2)}\leq\liminf_{j\to\infty}\|\nabla U_j\|^2_{L^2(\R^2,\mathcal H^2)}=T$.
\item $U_j\rightharpoonup V$ in $L^{2}(\R^2,\mathcal H)$, and $V$ is radial.
\item $V_j(y) \rightharpoonup V(y)$, for a.e. $ y \in \R^2$.
\end{itemize}
The almost everywhere weak convergence $U_j(y) \rightharpoonup V(y)$, is proved exactly as in the higher dimensional case.

Next, one can see (as in the higher dimensional case) that the integrals $\int_{\R^2}|\mathcal W(U_j)|$ are uniformly bounded. Similarly, by using Fatou's lemma and the sequentially weakly lower semicontinuity of $\mathcal W$, we get $\mathcal W(V)\in L^1(\R^2)$, and $\int_{ \R^2}\mathcal W(V) \leq 0$.

It remains to show that $V\neq 0$. To see this, we consider the modified potential
\begin{equation*}
 \mathcal{\widetilde W}(U)=\begin{cases}
0 &\text{ when } \|U\|_{\mathcal H}\leq \delta\\
\mathcal W(U) &\text{ when } \|U\|_{\mathcal H}> \delta,
\end{cases}
\end{equation*}
which is sequentially weakly lower semicontinuous. By applying as previously Fatou's lemma to the sequence $\mathcal{\widetilde W}(U_j)$, it follows that
$$\int_{ \R^2}\mathcal{\widetilde W}(V) \leq \liminf_{j\to\infty}\int_{\R^2} \mathcal{\widetilde W}(U_j)\leq  \liminf_{j\to\infty}\Big(\int_{\R^2} \mathcal{ W}(U_j)-\alpha\int_{\|U_j\|_{\mathcal H}\leq \delta}\|U_j\|^2_{\mathcal H} \Big)\leq -\alpha,$$
which proves that $V\neq 0$.

Finally, we establish that $\int_{\R^2}\mathcal W(V)=0$. Let us assume by contradiction that $\int_{\R^2}\mathcal W(V)<0$. Then, we proceed as in  \cite[Lemma 4.1.]{smy2} to derive a Pohozaev type identity for $V$. For every $0<r<s$ fixed, and $\kappa\in (0,\frac{s}{r})$, we define the radial comparison map
\begin{equation}
V_\kappa(t)=\begin{cases}
V(\frac{t}{\kappa}) &\text{ when } 0\leq t\leq \kappa r,\\
V(r+(s-r)\frac{t-\kappa r}{s-\kappa r}) &\text{ when } \kappa r\leq t\leq s,\\
V(t)&\text{ when } t\geq s.
\end{cases}
\end{equation}
Here, by abuse of notation, we write $V_\kappa(t)$ instead of $V_\kappa(y)$, where $t = |y|$, $y\in\R^2$.

One can check that when $\kappa$ is sufficiently close to $1$, we have
\begin{multline*}
\int_{\R^2}\mathcal W(V_\kappa)=\kappa^2 \int_{B(0,r)}\mathcal W(V)+ \int_{\R^2 \setminus B(0,s)}\mathcal W(V)\\
+2\pi\int_r^s \big((s-\kappa r)\frac{t-r}{s-r}+\kappa r\big)\frac{s-\kappa r}{s-r} \mathcal W(V(t))d t \leq 0,
\end{multline*}
which implies that $V_\kappa \in\mathcal A$. Therefore, setting $f(\kappa):=\int_{\R^2}\frac{1}{2}\|\nabla V_\kappa\|_{\mathcal H^2}^2$, we deduce that
\begin{align*}
f(\kappa)&=\int_0^r\pi t\|V'(t)\|_{\mathcal H}^2 dt+\int_r^s\pi\big((s-\kappa r)\frac{t-r}{s-r}+\kappa r\big)\frac{s-r}{s-\kappa r} \|V'(t)\|_{\mathcal H}^2d t,
\end{align*}
and $f'(1)=\frac{\pi r s}{s-r}\int_r ^s\|V'(t)\|^2_{\mathcal H}=0$, in view of the minimality of $V$. This implies that $V'\equiv 0$, and $V\equiv 0$, which is a contradiction. \qed
\end{proof}

\section{The ground state corresponding to a heteroclinic orbit}\label{sec:application}

In this section, we consider the elliptic system
\begin{equation}\label{system}
\Delta u(x_1,\ldots,x_{d+1})= \nabla W( u(x_1,\ldots,x_{d+1})), \ u:\R^{d+1}\to\R^m,  \ d,m\geq 2,
\end{equation}
where the potential $W$ is such that
\begin{subequations}\label{w13}
\begin{equation}\label{w1}
\text{$W\in C^{2}(\R^m; \R)$ is nonnegative, and has (at least) two zeros $a^-$ and $a^+$,}
\end{equation}
\begin{equation}\label{w2}
\text{the Hessian matrices $D^2 W(a^\pm)$ are positive definite,}
\end{equation}
\begin{equation}\label{w3b}
\text{the map $u\mapsto \nabla W(u)$ is globally Lipschitz in $\R^m$.}
\end{equation}
Moreover, we assume the existence of a heteroclinic orbit $e\in C^2(\R;\R^m)$ connecting $a^-$ to $a^+$:
\begin{equation}\label{ode0}
e''(t)=\nabla W(e(t)), \ \lim_{t\to\pm\infty}e(t)=a^\pm.
\end{equation}
\end{subequations}
Since the minima $a^\pm$ are nondegenerate, we know that $e$ converges exponentially fast to the minima $a^\pm$ (cf. for instance \cite{antonop}). In particular, we have $W(e),|e'|^2, |e''|^2\in L^1(\R)$. We also recall that the Action functional associated to system \eqref{ode0} is denoted by
\begin{equation}\label{energy1}
E_1(v)=\int_{\R}\big(\frac{1}{2}|v'|^2+W(v)\big), \text{ or } E_1(v,I)=\int_{I}\big(\frac{1}{2}|v'|^2+W(v)\big),  \ I\subset \R.
\end{equation}
In order to construct a ground state corresponding to the heteroclinic orbit $e$, we shall work in the affine space $e+L^2(\R;\R^m)$, and consider the following renormalized functional:
\begin{equation}
\mathcal W(h)=
\begin{cases}
E_1(e+h)-E_1(e), &\text{ when the distributional derivative }  h' \in L^2(\R;\R^m),\\
+\infty,&\text{ otherwise,}
\end{cases}
\end{equation}
defining a potential $\mathcal W$ in the Hilbert space $\mathcal H=L^2(\R;\R^m)$. It is clear that $\mathcal W$ takes its values in $[-\mathcal W_m,+\infty]$, with $\mathcal W_ m=E_1(e)$, and one can see as in \cite[Lemma 3.1. (i)]{ps2} that $\mathcal W$ is sequentially weakly lower semicontinuous. In addition, when $h \in H^1(\R;\R^m)$, an integration by parts gives
$\int_\R \nabla W(e)\cdot h=\int_\R e''\cdot h=-\int_\R e'\cdot h'$, since $e'\in H^{1}(\R;\R^m)$. Thus, we derive an alternative expression of the potential $\mathcal W$:
\begin{align}\label{alter}
\mathcal W(h)&=\int_\R \Big[\frac{1}{2}|e'+h'|^2-\frac{1}{2}|e'|^2 +W(e+h)-W(e)\Big]\nonumber \\
&=\int_\R \Big[\frac{1}{2}|h'|^2  +W(e+h)-W(e)- \nabla W(e)\cdot h\Big].
\end{align}
Setting $f(t,h)=W(e(t)+h)-W(e(t))- \nabla W(e(t))\cdot h$, we have in view of \eqref{w3b}:
\begin{equation}\label{abc}
|f(t,h)|\leq L|h|^2, \text{ $\forall t\in\R$, $\forall h\in \R^m$, and for a constant $L>0$.}
\end{equation}

 \subsection{Ground state for $d\geq 3$} When $d\geq 3$, we also assume that
\begin{subequations}\label{ass2}
\begin{equation}\label{nondegw}
\text{$\mathcal W(h)\geq 0$ holds for $\|h\|_{\mathcal H}\leq \delta$ (with $\delta>0$ small enough),}
\end{equation}
\begin{equation}\label{negw}
\text{and $\mathcal W(h_0)<0$, for some $h_0\in H^1(\R;\R^m)$. }
\end{equation}
\end{subequations}
In Proposition \ref{propo1}, we provide examples of potentials $W$ satisfying assumptions \eqref{w13} and \eqref{ass2}.
Provided that \eqref{w13} as well as \eqref{ass2} hold (with $d\geq 3$), it is clear that the potential $\mathcal W$ satisfies the assumptions (\textbf{H1})-(\textbf{H4}) of Theorem \ref{th1}. Therefore, there exists a radial map $V\in\mathcal A$ solving the minimization problem \eqref{minpb1}, in the class
\begin{equation}\label{class1bis}
\mathcal A=\left\lbrace U\in\mathcal D^{1,2}_{rad}(\R^d;\mathcal H) ,   \ \mathcal W(U)\in L^1(\R^d), \ \ \int_{\R^d}\mathcal W(U)\leq -1 \right\rbrace,
\end{equation}
Moreover, we have $\int_{\R^d}\mathcal W(V)=-1$, and $V\not\equiv 0$.

Setting $x=(x_1,\ldots, x_{d+1})=(y,x_{d+1})$, with $y=(x_1,\ldots,x_d)$, we recover from the minimizer $V:\R^d\to\mathcal H=L^2(\R;\R^m)$ a solution $u$ to system \eqref{system}. Indeed, we shall prove that the map $v(x):=[V(y)](x_{d+1})$ solves the equation
\begin{equation}\label{prel}
\mu \Delta_y v(y,x_{d+1})+\frac{\partial^2 v}{\partial x_{d+1}^2}(y,x_{d+1})=\nabla W(e(x_{d+1})+v(y,x_{d+1}))-\nabla W(e(x_{d+1})),
\end{equation}
for $(y,x_{d+1})\in \R^d\times\R$, and for a Lagrange multiplier $\mu>0$. Then, setting $\tilde v(x)=v(\mu^{-\frac{1}{2}}y,x_{d+1})$, it follows that the map $u(x)=\tilde v(x)+e(x_{d+1})$ is a solution to system \eqref{system} satisfying the asymptotic properties of Theorem \ref{th1ground} below.

Finally, this solution $u$ can be characterized variationally, by introducing the following renormalized energy functional for $\tilde u(x)=e(x_{d+1})+\tilde \xi(x)$, with $\tilde \xi(x)=[\tilde\Xi(y)](x_{d+1})$, $\tilde\Xi\in\mathcal D^{1,2}_{rad}(\R^d;\mathcal H)$, and $\mathcal W(\tilde\Xi)\in L^1(\R^d)$:
\begin{align}\label{renormene}
\mathcal E(\tilde u)
=\int_{\R^{d}}\big(\frac{1}{2}\|\nabla \tilde \Xi(y)\|^2_{\mathcal H^d}+\mathcal W(\tilde\Xi(y))\big)dy.
\end{align}
Notice that $\mathcal E$ is related to the functional
\begin{equation}\label{energybis}
E_{d+1}(v,\Omega)=\int_{\Omega}\big(\frac{1}{2}|\nabla v|^2+W(v)\big), \ \Omega\subset\R^{d+1},
\end{equation}
associated to system \eqref{system}, since given a test function $\phi\in\mathcal D(\R^{d+1};\R^m)$, one can check that
\begin{multline}
E_{d+1}(e(x_{d+1})+\phi(x),\operatorname{supp}\phi)-E_{d+1}(e(x_{d+1}),\operatorname{supp}\phi)\\=\int_{\R^{d+1}}\big(\frac{1}{2}|\nabla\phi(x)|^2+f(x_{d+1},\phi(x))\big)dx =\mathcal E(e(x_{d+1})+\phi(x)).
\end{multline}
\begin{theorem}\label{th1ground}
Let $d\geq 3$. Under assumptions \eqref{w13} and \eqref{ass2}, there exists a  solution $u\in C^2(\R^d\times\R;\R^m)$ to system \eqref{system} satisfying the following properties.
\begin{itemize}
\item[(i)]  $u(y,x_{d+1})=u(z,x_{d+1})$, whenever $|y|=|z|$ (with $,y,z\in \R^d$).
\item[(ii)] $\lim_{|x|\to\infty}(u(x)-e(x_{d+1}))=0$ (with $x=(x_1,\ldots, x_{d+1})$), and $u(x)-e(x_{d+1})\not\equiv 0$. In particular $\lim_{y\in\R^d, |y|\to\infty}u(y,x_{d+1})=e(x_{d+1})$.
\item[(iii)] Let $\tilde u$ be a weak solution to system \eqref{system} such that $\tilde u(x)=e(x_{d+1})+\tilde \xi(x)$, with  $\tilde\xi(x)=[\tilde\Xi(y)](x_{d+1})$, $\tilde\Xi\in\mathcal D^{1,2}_{rad}(\R^d;\mathcal H)$, $\tilde\Xi \not\equiv 0$, and $\mathcal W(\tilde\Xi)\in L^1(\R^d)$. Then, we have $\mathcal E(u)\leq\mathcal E( \tilde u)$.
\end{itemize}
\end{theorem}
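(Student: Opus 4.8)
The plan is to recover the PDE \eqref{system} from the Hilbert-space minimizer $V$ of Theorem \ref{th1}, then verify the three asserted properties. Since Theorem \ref{th1} already supplies a radial $V\in\mathcal A$ with $\int_{\R^d}\mathcal W(V)=-1$, $V\not\equiv 0$, and $\frac12\int_{\R^d}\|\nabla V\|_{\mathcal H^d}^2=T$, the first task is the Euler--Lagrange analysis. Because $V$ minimizes the Dirichlet energy under the constraint $\int_{\R^d}\mathcal W(U)\le -1$, and the constraint is active ($\int\mathcal W(V)=-1<0$), there is a Lagrange multiplier $\mu>0$ so that $V$ is a critical point of $U\mapsto \frac12\int\|\nabla U\|^2_{\mathcal H^d} + \tfrac1\mu\int\mathcal W(U)$ (the sign and positivity of $\mu$ follow from the scaling inequality \eqref{eqsca}, exactly as in \cite{brezis}). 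Unwinding the definition of $\mathcal W$ in \eqref{alter}--\eqref{abc} and the fact that the inner derivative $\tfrac{\partial}{\partial x_{d+1}}$ enters $\mathcal W$, the critical point equation for $v(y,x_{d+1}):=[V(y)](x_{d+1})$ is precisely \eqref{prel}. Here I would work first with $\mathcal D(\R^{d+1};\R^m)$ test functions $\phi$, using the decoupling identity displayed just before the theorem, $E_{d+1}(e+\phi,\operatorname{supp}\phi)-E_{d+1}(e,\operatorname{supp}\phi)=\mathcal E(e+\phi)$, together with the global Lipschitz bound \eqref{w3b} to differentiate under the integral; since radial test functions are weakly dense enough and $V$ is a constrained minimizer over radial maps, the equation holds in the distributional sense on all of $\R^{d+1}$ (the radial restriction costs nothing because, as recalled in the introduction via \cite{maris}, minimizers are automatically radial; alternatively one invokes the principle of symmetric criticality). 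Rescaling $\tilde v(x)=v(\mu^{-1/2}y,x_{d+1})$ absorbs $\mu$, and $u=e(x_{d+1})+\tilde v$ then solves \eqref{system} weakly; elliptic regularity together with $\nabla W\in C^1$ upgrades $u$ to $C^2$, giving a classical solution.

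For property (i), radial symmetry of $u$ in the $y$ variable is inherited directly from the radial symmetry of $V\in\mathcal D^{1,2}_{rad}(\R^d;\mathcal H)$ established in Theorem \ref{th1}. For property (ii), I would combine the pointwise decay bound \eqref{bbound}, which in this setting reads $\|V(y)\|_{L^2(\R;\R^m)}\le C|y|^{(2-d)/2}$ for a.e.\ $y$, with the uniform ellipticity of \eqref{system} and interior elliptic estimates: a bound on $\|\tilde v\|_{L^2}$ over unit balls centered far from the origin, plus the Lipschitz bound on the nonlinearity, yields a $C^{1,\alpha}$ (indeed $L^\infty$) bound on $\tilde v$ on those balls that tends to $0$ as $|x|\to\infty$, which is exactly $u(x)-e(x_{d+1})\to 0$. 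The nontriviality $u(x)-e(x_{d+1})\not\equiv 0$ is immediate from $V\not\equiv 0$. The decay in the transverse variable $x_{d+1}$ alone follows because for each fixed $y$, $V(y)\in L^2(\R;\R^m)$ so $[V(y)](x_{d+1})\to 0$ along a sequence, and the equation plus regularity promotes this to a genuine limit; the asymptotics $e(x_{d+1})\to a^\pm$ then give the convergence to the heteroclinic profile.

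Property (iii) is the variational characterization, and this is where I expect the main work. Given any competitor $\tilde u = e(x_{d+1})+\tilde\xi$ of the stated form, with $\tilde\Xi\in\mathcal D^{1,2}_{rad}(\R^d;\mathcal H)$, $\tilde\Xi\not\equiv 0$, $\mathcal W(\tilde\Xi)\in L^1(\R^d)$, the point is that $\mathcal E(\tilde u)=\frac12\int_{\R^d}\|\nabla\tilde\Xi\|^2_{\mathcal H^d}+\int_{\R^d}\mathcal W(\tilde\Xi)$, so I must show this is $\ge \mathcal E(u)=\frac12\int\|\nabla\tilde V\|^2+\int\mathcal W(\tilde V)$ where $\tilde V(y)=V(\mu^{-1/2}y)$. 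The strategy is the standard Pohozaev/scaling trick: since $\tilde u$ is a weak solution, it satisfies a Pohozaev (Hamiltonian) identity — this is the content of Lemma \ref{hamid} referenced in the introduction — which pins down the ratio between $\int\|\nabla\tilde\Xi\|^2$ and $-\int\mathcal W(\tilde\Xi)$; in particular $\int_{\R^d}\mathcal W(\tilde\Xi)<0$, so after rescaling $\tilde\Xi$ by a factor $\lambda$ chosen so that $\int\mathcal W(\lambda\cdot\text{-rescaling})=-1$, the rescaled map lies in the admissible class $\mathcal A$, whence its Dirichlet energy is $\ge T$ by definition of $T$. Running the Pohozaev relation for both $u$ and $\tilde u$ to express $\mathcal E$ purely in terms of the Dirichlet term (for $d\ge 3$, $\mathcal E(\tilde u)=\frac1d\int\|\nabla\tilde\Xi\|^2$ on solutions, by the same computation as in \cite[Lemma 2.3]{brezis} combined with \eqref{eqsca}), the inequality $\mathcal E(u)\le\mathcal E(\tilde u)$ reduces to the minimality of $T$ together with the scaling inequality \eqref{eqsca}. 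The delicate points — and the ones I would spend the most care on — are justifying the Pohozaev identity for a merely weak solution of the stated regularity (one needs enough decay of $\tilde\Xi$ and $\nabla\tilde\Xi$ to discard boundary terms, which the class $\mathcal D^{1,2}_{rad}$ and the radial lemma of \cite{BerLionsI} should provide), and checking that the $\lambda$-rescaled competitor genuinely satisfies $\mathcal W(\cdot)\in L^1$ and the constraint with the correct sign so that it is admissible for the infimum defining $T$.
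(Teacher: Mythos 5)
Your overall route matches the paper's: recover the Euler--Lagrange equation from the constrained minimizer $V$, rescale by $\mu^{-1/2}$ to absorb the Lagrange multiplier, obtain asymptotics via the decay bound and elliptic bootstrap, and prove the ground-state property (iii) by combining the Pohozaev identity of Lemma~\ref{hamid} with the scaling inequality~\eqref{eqsca}. The identity $\mathcal E(\tilde u)=\frac1d\int\|\nabla\tilde\Xi\|^2_{\mathcal H^d}$ you use is indeed~\eqref{hamground4}, and the comparison $-\int\mathcal W(\tilde V)\leq -\int\mathcal W(\tilde\Xi)$ follows exactly as you outline.

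The one place where your proposal is materially thinner than the paper is the passage from ``$V$ is a constrained minimizer'' to the Lagrange multiplier equation. In this infinite-dimensional setting one cannot simply assert a multiplier exists: the paper devotes Lemmas~\ref{recover} and~\ref{recover2} to this. Lemma~\ref{recover} verifies that for radial test perturbations $\phi\in H^1_{0,rad}(B_d(0,R)\times\R;\R^m)$, the competitor $V+\lambda\Phi$ stays in $\mathcal D^{1,2}_{rad}(\R^d;\mathcal H)$ with $\mathcal W(V+\lambda\Phi)\in L^1(\R^d)$, i.e.\ that the perturbation is admissible at all. Lemma~\ref{recover2} then proves that the two functionals $G_1$ (variation of the constraint) and $G_2$ (variation of the Dirichlet energy) are proportional \emph{and} that $G_2\not\equiv 0$ (otherwise $\Delta_y v=0$ would force $\nabla_y v\equiv 0$), which is what makes the proportionality nontrivial; the positivity of $\mu$ comes from admissibility of one-sided perturbations, not from~\eqref{eqsca} as you suggest. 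A second small misstep: citing \cite{maris} for the radial reduction is off target here --- that result is for the Euclidean problem~\eqref{minpb}, and the paper in fact invokes Palais's principle of symmetric criticality \cite{palais} (which you do mention as an alternative) to pass from radial test functions to arbitrary ones. Finally, in (iii) you correctly flag that one must check the rescaled competitor stays admissible; the paper side-steps this by comparing directly via~\eqref{eqsca}, which is already stated for all $U$ with $\mathcal W(U)\in L^1$ and $\int\mathcal W(U)<0$, so no separate rescaling verification is needed.
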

\begin{proof}

Let $V\in\mathcal A$, be the minimizer provided by Theorem \ref{th1}. We proceed in several steps to recover the solution $u$  from the minimizer $V$. In what follows $B_d(0,R)$ is a ball of $\R^d$ centered at $0$.

\begin{lemma}\label{recover}
Let $v(x):=[V(y)](x_{d+1})$, then, we have $v\in H^1(B_d(0,R)\times\R;\R^m)$. On the other hand, let $H^1_{0,rad}(B_d(0,R)\times\R;\R^m)$ be the subspace of functions $\phi \in  H^1_0(B_d(0,R)\times\R;\R^m)$ which are radial with respect to $y$, i.e. $ \phi( gy,x_{d+1})=\phi(y,x_{d+1})$, $\forall g \in O(\R^d)$, and for a.e. $(y,x_{d+1})\in B_d(0,R)\times\R$. Any $\phi\in  H^1_{0,rad}(B_d(0,R)\times\R;\R^m)$, is extended to a map of $H^1(\R^{d+1};\R^m)$ by setting $\phi\equiv 0$ on $\R^{d+1}\setminus B_d(0,R)\times\R$. Then, setting $[\Phi(y)](x_{d+1}):=\phi(x)$, we have $V+\lambda\Phi\in\mathcal D^{1,2}_{rad}(\R^d;\mathcal H)$, and $\mathcal W(V+\lambda\Phi)\in L^1(\R^d)$, for every $\lambda\in\R$.
\end{lemma}
\begin{proof}
Since $V\in L^{2^*}(\R^d;\mathcal H)$, we also have that $V\in L^{2}(B_d(0,R);\mathcal H)$, for every ball $B_d(0,R)$ of $\R^d$. In view of the isomorphism
$$L^2(B_d(0,R)\times \R;\R^m)\simeq L^2(B_d(0,R);L^2(\R;\R^m)),$$
we deduce that $v \in L^2(B_d(0,R)\times \R;\R^m)$. Similarly, since $ V_{x_1},\ldots,V_{x_d}$ belong to $L^2(\R^d;L^2(\R;\R^m))$, it follows that the weak derivatives $v_{x_1},\ldots,v_{x_d}$ exist and belong to $L^2(\R^{d+1};\R^m)$. Next, in view of \eqref{alter} and \eqref{abc}, we have
\begin{align*}
\int_{B_d(0,R)\times\R}\frac{1}{2}\Big|\frac{d [V(y)]}{d x_{d+1}}(x_{d+1})\Big|^2dx &=\int_{B_d(0,R)}\mathcal W(V)- \int_{B_d(0,R)\times\R}f(x_{d+1},v(x))dx<\infty.
\end{align*}
This implies that the derivative $v_{x_{d+1}}$ exists and belongs to $L^2(B_d(0,R)\times\R;\R^m)$.

On the other hand, given $\phi\in  H^1_{0,rad}(B_d(0,R)\times\R;\R^m)$, the corresponding map $\Phi$ is radial and belongs to  $H^1_0(B_d(0,R);L^2(\R;\R^m))$ as well as to $H^1(\R^d;L^2(\R;\R^m))$ which is embedded in $L^{2^*}(\R^d;L^2(\R;\R^m))$ (cf. \cite[Theorem 4.15]{kreuter}).
This proves that $V+\lambda\Phi\in\mathcal D^{1,2}_{rad}(\R^d;\mathcal H)$. Moreover, since
$$\int_{B_d(0,R)}|\mathcal W(V+\lambda\Phi)|\leq\int_{B_d(0,R)\times\R}\Big(\frac{1}{2}|(v_{x_{d+1}}+\lambda   \phi_{x_{d+1}})(x)|^2+|f(x_{d+1},(v+\lambda\phi)(x))|\Big)dx,$$
it is clear that $\mathcal W(V+\lambda\Phi)\in L^1(\R^d)$.\qed
\end{proof}

In view of Lemma \ref{recover}, given $R>0$, $v$ is a minimizer of the functional $$S(w)=\int_{B_d(0,R)\times \R}|\nabla_y w(x)|^2 dx,$$ in the class $$\mathcal C=\{w=v+\phi: \phi \in  H^1_{0,rad}(B_d(0,R)\times\R;\R^m): F(w)\leq -1\},$$
where $$F(w)=\int_{B_d(0,R)\times\R} \Big(\frac{1}{2}|w_{x_{d+1}}(x)|^2+f(x_{d+1},w(x))\Big)dx=\int_{B_d(0,R)} \mathcal W(V+\Phi)dy.$$
Next, we define the continuous linear functionals on $H^1_{0,rad}(B_d(0,R)\times\R;\R^m)$:
$$G_1(\phi)=\int_{\R^{d+1}}\big(v_{x_{d+1}}(x)\cdot\phi_{x_{d+1}}(x)+\big[\nabla W(e(x_{d+1})+v(x))-\nabla W(e(x_{d+1}))\big]\cdot\phi(x)\big) dx,$$
and $$G_2(\phi)=\int_{\R^{d+1}}(v_{x_1}\cdot \phi_{x_1}+\ldots  +v_{x_d}\cdot \phi_{x_d} )dx,$$
where $\phi\in H^1_{0,rad}(B_d(0,R)\times\R;\R^m)$ is extended to a map of $H^1(\R^{d+1};\R^m)$ by setting $\phi\equiv 0$, on $ \R^{d+1}\setminus B_d(0,R)\times\R$. One can see that
$$\frac{d}{d\lambda}F(v+\lambda\phi)\Big|_{\lambda=0}=G_1(\phi).$$

In the next lemma we show that minimizing functional $S$ under the constraint $F\leq -1$, provides a Lagrange multiplier for the functionals $G_1$ and $G_2$.
\begin{lemma}\label{recover2} Let $R>0$ be large enough so that $\int_{B_d(0,R)}\|\nabla V(y)\|^2_{\mathcal H^d}dy>0$. Then, there exists a constant $\mu>0$ (independent of $R$) such  that $\mu G_2+G_1=0$.
\end{lemma}
\begin{proof}
Our first claim is that
\begin{equation}\label{claim Lag}
G_1(\phi)=0\Longrightarrow G_2(\phi)=0.
\end{equation}
 Indeed, if $G_1(\phi)=0$, then we get $$\int_{\R^d}\mathcal W(V+\lambda\Phi)dy=\int_{\R^d}\mathcal W(V(y))dy +\lambda\epsilon(\lambda)=-1+\lambda\epsilon(\lambda),$$
with $\lim_{\lambda\to 0}\epsilon(\lambda)=0$. In particular, the map $\Psi_\lambda(y)=(V+\lambda \Phi)(|-1+\lambda \epsilon(\lambda)|^{\frac{1}{d}}y)$ is such that $\mathcal W(\Psi_\lambda)=-1$. Thus, in view of the minimality  of $V$, we have for $\lambda$ close to $0$:
$$\int_{\R^d}\|\nabla V(y)\|^2_{\mathcal H^d} dy\leq \int_{\R^d}\|\nabla \Psi_\lambda(y)\|^2_{\mathcal H^d} dy=(1-\lambda \epsilon(\lambda))^{\frac{2-d}{d}} \int_{\R^d}\|\nabla (V+\lambda\Phi)(y))\|^2_{\mathcal H^d} dy,$$
which implies that $\int_{\R^d}\langle\nabla V(y)\cdot\nabla \Phi(y)\rangle_{\mathcal H^d} dy=G_2(\phi)=0$.

Next, we check that $G_2\not\equiv 0$, provided that $\int_{B_d(0,R)}\|\nabla V\|^2_{\mathcal H^d}>0$. Indeed, if $G_2 \equiv 0$, the map $v$ which is radial with respect to $y$ would satisfy $\Delta_y v(y,x_{d+1})=0$ in $B_d(0,R)\times\R$ (cf. the principle of symmetric criticality in \cite{palais}). Hence, we would obtain that $v(y,x_{d+1})=A(x_{d+1})+B(x_{d+1})|y|^{2-d}$, for some functions $A,B:\R\to\R$. However, the integrability of $|\nabla_y v|^2$ on $B_d(0,R)\times\R$, implies that $B\equiv 0$, and $\nabla_y v \equiv 0$, which is a contradiction.

In view of \eqref{claim Lag}, it follows that $G_1\not\equiv 0$, and both linear functionals $G_1$ and $G_2$ are proportional. Let $\mu\neq 0$, be such that $\mu G_2+G_1=0$. By choosing a test function $\phi$ such that $G_1(\phi)\neq 0$, one can see that $\mu=-\frac{G_1(\phi)}{G_2(\phi)}$ is independent of $R$ (large enough). Moreover, if $G_1(\phi)<0$, then $V+\lambda\Phi\in \mathcal A$ provided that $\lambda>0$ is small enough. Thus, the minimality of $V$ implies that $G_2(\phi)\geq 0$. This proves that $\mu>0$. \qed
\end{proof}
So far we established that
\begin{multline}
0=\int_{\R^{d+1}}\big(\mu(v_{x_1}\cdot \phi_{x_1}+\ldots  +v_{x_d}\cdot \phi_{x_d} )+v_{x_{d+1}}\cdot\phi_{x_{d+1}}\big)\\
+\int_{\R^{d+1}}(\nabla W(e(x_{d+1})+v(x))-\nabla W(e(x_{d+1}))\cdot\phi(x)\big) dx, \ \forall \phi\in\mathcal D_{rad}(\R^{d+1};\R^m), \end{multline}
where $\mathcal D_{rad}(\R^{d+1};\R^m)$ is the subspace of test functions which are radial with respect to $y$. Setting $\tilde v(x)=v(\mu^{-\frac{1}{2}}y,x_{d+1})$, we deduce in view of the principle of symmetric criticality (cf. \cite{palais}) that
$$\Delta \tilde v(x)=\nabla W(e(x_{d+1})+\tilde v(x))-\nabla W(e(x_{d+1})) \text{ holds weakly in } \R^{d+1}.$$
Therefore, $u(x)=\tilde v(x)+e(x_{d+1})$ is a weak solution of system \eqref{system}. In the next lemma, we study its asymptotic properties.

\begin{lemma}\label{AS}(Asymptotic behaviour)
We have
\begin{itemize}
\item[(i)] $\lim_{|x|\to\infty}\|\tilde v\|_{H^1(B(x,1);\R^m)}=0$.
 \item[(ii)] $\tilde v \in C^2(\R^{d+1};\R^m)$, and $\lim_{|x|\to\infty}\tilde v(x)=0$, as well as $\lim_{|x|\to\infty}\nabla \tilde v(x)=0$.
 \end{itemize}
\end{lemma}
\begin{proof}
(i) Let $x_n=(y_n,z_n)\in\R^d\times\R$ be a sequence such that $\lim_{n\to\infty}|x_n|=\infty$. If $\lim_{n\to\infty}|y_n|=\infty$, then it is clear that
$\lim_{n\to\infty}\|\tilde v\|_{L^2(B(x,1):\R^m)}=0$, since  $\lim_{|y|\to\infty}\|V(y)\|_{L^2(\R;\R^m)}=0$. Otherwise, if $\sup_n| y_n|\leq R$, for a constant $R$, then we have $\lim_{n\to\infty}|z_n|=\infty$, and we know that $\tilde v \in L^2(B(0,R+1)\times \R;\R^m)$. Consequently, we also obtain $\lim_{|x|\to\infty}\|\tilde v\|_{L^2(B(x,1);\R^m)}=0$. Finally, it is clear that for $i=1,\ldots,d$, we have $\lim_{|x|\to\infty}\| \tilde v_{x_i}\|_{L^2(B(x,1);\R^{m})}=0$, since  $ \tilde v_{x_i}\in L^2(\R^{d+1};\R^{m})$. On the other hand,  in view of the inequality
\begin{align*}
\int_{B_d(y_n,R)\times\R}\frac{1}{2}|v_{x_{d+1}}|^2dx &=\int_{B_d(y_n,R)}\mathcal W(V)- \int_{B_d(y_n,R)\times\R}f(x_{d+1},v(x))dx\\
&\leq \int_{B_d(y_n,R)}\mathcal W(V)+L\|v\|^2_{L^2(B_d(y_n,R)\times\R;\R^m)},
\end{align*}
one can check as previously that  $\lim_{|x|\to\infty}\| \tilde v_{x_{d+1}}\|_{L^2(B(x,1);\R^{m})}=0$.

(ii) Set $g(x)=\nabla W(e(x_{d+1})+\tilde v(x))-\nabla W(e(x_{d+1}))$. In view of i) and since the map $\R^{m}\ni u\mapsto\nabla W(u)$ is globally Lipschitz, it is clear that $g\in H^1_{loc}(\R^{d+1};\R^m)$, and  $\lim_{|x|\to\infty}\|g\|_{H^1(B(x,1);\R^m)}=0$. Therefore, we obtain by elliptic regularity that $\tilde v\in H^3_{loc}(\R^{d+1};\R^m)$, and  $\lim_{|x|\to\infty}\|\tilde v\|_{H^3(B(x,1/2);\R^m)}=0$. Next, by a standard bootstrap argument, it follows that  $\tilde v \in C^2(\R^{d+1};\R^m)$, and $\lim_{|x|\to\infty}\tilde v(x)=0$, as well as $\lim_{|x|\to\infty}\nabla \tilde v(x)=0$. \qed
\end{proof}

To complete the proof of Theorem \ref{th1ground}, it remains to establish the variational characterization of the solution $u$. Let $\tilde v(x)=u(x)-e(x_{d+1})$, $[\tilde V(y)](x_{d+1})=\tilde v(x_1,\ldots,x_{d+1})$, and let $\tilde u$ be a weak solution to system \eqref{system} such that $\tilde u(x)=e(x_{d+1})+\tilde \xi(x)$, with  $\tilde\xi(x)=[\tilde\Xi(y)](x_{d+1})$,  $\tilde\Xi\in\mathcal D^{1,2}_{rad}(\R^d;\mathcal H)$, $\tilde\Xi \not\equiv 0$, and $\mathcal W(\tilde\Xi)\in L^1(\R^d)$. Then, we have in view of \eqref{hamground3}:
\begin{subequations}\label{comb}
\begin{equation}
\int_{\R^{d}}\|\nabla\tilde V(y)\|^2_{\mathcal H^d}dy=-\frac{2d}{d-2}\int_{\R^d}\mathcal W(\tilde V(y))dy,
\end{equation}
\begin{equation}
\int_{\R^{d}}\|\nabla\tilde \Xi(y)\|^2_{\mathcal H^d}dy=-\frac{2d}{d-2}\int_{\R^d}\mathcal W(\tilde \Xi(y))dy.
\end{equation}
\end{subequations}
Moreover, we know that $\tilde V(y)=V(\mu^{-\frac{1}{2}}y)$, and the minimizer $V$ satisfies
\begin{align}\label{minpb1bis}
T=\int_{\R^{d}}\|\nabla V\|^2_{\mathcal H^d}=\min\left\lbrace \int_{\R^d}\dfrac{1}{2}\|\nabla U\|_{\mathcal H^d}^2 \, :U\in\mathcal A \right\rbrace, \text{ and } \int_{\R^d}\mathcal W( V)=-1.
\end{align}
Consequently, one can check that $$\frac{1}{2}\int_{\R^d}\|\nabla \tilde V\|^2_{\mathcal H^d}= T \Big(-\int_{\R^d}\mathcal W(\tilde V)\Big)^{\frac{d-2}{d}}.$$
On the other hand, we also have (cf. \eqref{eqsca}):
$$\frac{1}{2}\int_{\R^d}\|\nabla \tilde \Xi\|_{\mathcal H^d}^2\geq T \Big(-\int_{\R^d}\mathcal W(\tilde\Xi)\Big)^{\frac{d-2}{d}}.$$
Combining \eqref{comb} with these relations, we deduce that $-\int_{\R^d}\mathcal W(\tilde V)\leq -\int_{\R^d}\mathcal W(\tilde\Xi)$, which implies in view of \eqref{hamground4} that 
$\mathcal E(u)\leq\mathcal E( \tilde u)$. \qed
\end{proof}

Now, we provide examples of potentials satisfying the assumptions of Theorem \ref{th1ground}.
\begin{proposition}\label{propo1}
Let $\widetilde W\in C^2(\R^m;\R)$ be a nonnegative potential vanishing only at the points $a^-$ and $a^+$, and satisfying \eqref{w2}, as well as  $\liminf_{|u|\to\infty}\widetilde W(u)>0$. Let also $e\in C^2(\R;\R^m)$ be a minimizing heteroclinic orbit\footnote{We refer for instance to \cite{antonop} for the existence of such an orbit.} connecting $a^-$ to $a^+$:
\begin{subequations}
\begin{equation}
 e''(t)=\nabla \widetilde W(e(t)), \ \lim_{t\to\pm\infty}e(t)=a^\pm,
 \end{equation}
 \begin{equation}\label{min22}
\tilde E_1(e)=  \int_\R\Big[\frac{1}{2}| e'|^2+\widetilde W(e)\Big] \leq \tilde E_1(e+h)=\int_\R \Big[\frac{1}{2}| e'+h'|^2+\widetilde W(e+h)\Big], \forall h \in H^1(\R;\R^m).
  \end{equation}
  \end{subequations}
Then, there exists an open neighbourhood $\mathcal O$ of the compact set $K=e(\R)\cup\{a^\pm\}$, and a nonnegative potential $ W\in C^2(\R^m;\R)$ coinciding with $\widetilde W$ in $\mathcal O$, and satisfying assumptions \eqref{w13} and \eqref{ass2}.
\end{proposition}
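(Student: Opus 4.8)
The plan is to construct $W$ by modifying $\widetilde W$ only far away from the compact set $K = e(\mathbb R)\cup\{a^\pm\}$, so that near $K$ nothing changes (preserving \eqref{w13}, since $e$ is still a heteroclinic with the same nondegenerate ends), while far away we add a large negative bump that produces a point $h_0$ with $\mathcal W(h_0)<0$, i.e.\ \eqref{negw}. The subtle point is that the bump must be \emph{disjoint} from $K$ so that it does not destroy condition \eqref{w1} ($W\geq 0$) — wait, it cannot: to get $\mathcal W(h_0)<0$ with $W\geq 0$ we must instead \emph{lower} $\widetilde W$ somewhere it is strictly positive, keeping $W\geq 0$ but making $W<\widetilde W$ on an open set $\Omega$ disjoint from $\overline{\mathcal O}$. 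Then for a test function $\phi\in\mathcal D(\mathbb R;\mathbb R^m)$ supported where $e(t)+\phi(t)$ enters $\Omega$, one computes $\mathcal W(\phi)=E_1(e+\phi)-E_1(e)$ with $W$ replacing $\widetilde W$; the gain $\int_\mathbb R[\widetilde W(e+\phi)-W(e+\phi)]$ can be made to dominate the (fixed, finite) cost $\int_\mathbb R[\tfrac12|e'+\phi'|^2-\tfrac12|e'|^2+\widetilde W(e+\phi)-\widetilde W(e)]$ by making the well in $\Omega$ deep enough.

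More precisely, first I would fix an open neighbourhood $\mathcal O$ of $K$ with $\overline{\mathcal O}$ compact, and a second open set $\Omega$ with $\overline\Omega\cap\overline{\mathcal O}=\emptyset$, chosen so that $\widetilde W\geq c_0>0$ on $\overline\Omega$; such $\Omega$ exists because $\widetilde W$ vanishes only at $a^\pm\in K$ and $\liminf_{|u|\to\infty}\widetilde W>0$, so the positivity of $\widetilde W$ away from $K$ is uniform on compact sets. Pick a point $p\in\Omega$, a ball $B(p,2r)\subset\Omega$, and a cutoff $\chi\in C_c^\infty(B(p,2r))$ with $0\le\chi\le1$, $\chi\equiv1$ on $B(p,r)$. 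Set $W = \widetilde W - A\,\chi$ with $A>0$ to be chosen; since $\widetilde W\ge c_0$ on $B(p,2r)\supset\operatorname{supp}\chi$ and $\widetilde W=0$ outside $K$ only, for $A\le c_0$ we keep $W\ge 0$ everywhere, $W\in C^2$, $W$ coincides with $\widetilde W$ on $\mathcal O$ (so \eqref{w1}, \eqref{w2}, \eqref{w3b} if $\nabla\widetilde W$ is globally Lipschitz — which I would either assume as part of the hypotheses on $\widetilde W$ or arrange by a further far-field modification of $\widetilde W$ that does not touch $\overline{\mathcal O}\cup\overline\Omega$), and $e$ is still a heteroclinic for $W$, i.e.\ \eqref{w13} holds.

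For \eqref{nondegw}: since $W=\widetilde W$ near $K$, and $e$ is a \emph{minimizing} heteroclinic for $\widetilde W$ (hence for $W$) with nondegenerate ends, the second variation of $\widetilde E_1=\mathcal W_m + \mathcal W$ at $e$ is a nonnegative quadratic form with a spectral gap above the kernel; but here one must be careful — the kernel (spanned by $e'$, if it lies in $H^1$) could make $e$ merely a degenerate minimum. This is exactly the issue the paper flags in the introduction, resolved for $d=2$ by the symmetry assumption \eqref{w1ter}; for $d\ge 3$, \eqref{nondegw} only asks $\mathcal W(h)\ge 0$ for $\|h\|_{\mathcal H}\le\delta$, which follows from $e$ being a \emph{global} minimizer of $\widetilde E_1$ over $e+H^1$: indeed $\mathcal W(h) = \widetilde E_1(e+h)-\widetilde E_1(e) + \int_\mathbb R A\,\chi(e+h) \ge \int_\mathbb R A\,\chi(e+h)\ge 0$ whenever $h'\in L^2$ (and $=+\infty$ otherwise), using $\operatorname{supp}\chi\subset\Omega$ which $e$ avoids — wait, but $e+h$ need not avoid $\Omega$; still the integrand $A\chi(e+h)\ge0$, so in fact $\mathcal W(h)\ge 0$ for \emph{all} $h\in\mathcal H$, which trivially gives \eqref{nondegw}. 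Hmm, but then \eqref{negw} would fail. The resolution: $W$ must go \emph{below} $\widetilde W$'s minimizer's energy landscape in a way visible to $H^1$-perturbations, so I would instead lower $\widetilde W$ along a tube around a path from $e(\mathbb R)$'s closure out to a deep well and back — no: the clean fix is to lower $\widetilde W$ near a point $p\in\Omega$ that a \emph{bounded} perturbation $h_0\in H^1$ of $e$ can reach, so that $\int_\mathbb R A\chi(e+h_0)$ is a fixed positive number times $A$ while $\int_\mathbb R[\tfrac12|(e+h_0)'|^2 - \tfrac12|e'|^2 + \widetilde W(e+h_0)-\widetilde W(e)]$ is a fixed finite number; then $\mathcal W(h_0)<0$ for $A$ large — but $A\le c_0$ was forced by $W\ge0$.

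The genuine main obstacle, then, is reconciling $W\ge0$ with $\mathcal W(h_0)<0$: one cannot just deepen a well. The correct construction, which I would carry out, is to \emph{enlarge the zero set is not allowed either} — so instead one makes $\widetilde W$ \emph{smaller} on a large region while staying $\ge0$, so that although $W(u)\ge0$ pointwise, a perturbation $e+h_0$ spending a long interval of $t$ in a region where $W$ is much smaller than $\widetilde W$ accumulates a large gain $\int[\widetilde W - W](e+h_0)$ at linear-in-length cost in the kinetic term only at the two transitions (cost $O(1)$) while the potential cost $\int[W(e+h_0)-W(e)]$ over the long flat stretch is $o(\text{length})$ if $W$ is made nearly zero there. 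Concretely: choose $p\in\Omega$ with $\widetilde W(p)=c_1>0$, flatten $W$ to be $\le\varepsilon$ on all of $B(p,R_0)$ for large $R_0$ (possible while $W\ge0$ since we only decrease), take $h_0$ so that $e+h_0$ equals $p$ on an interval $[-L,L]$ and transitions back to $e$ outside $[-L-1,L+1]$; then $\mathcal W(h_0)\le (\text{transition cost } C) + 2L\varepsilon - 2L\,c_1 + (\text{const})<0$ for $L$ large and $\varepsilon< c_1$. Finally I would double-check that this flattening is supported in $\Omega$, disjoint from $\mathcal O$, so $W=\widetilde W$ near $K$ and all of \eqref{w13} persists, and invoke Theorem \ref{th1ground}. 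The delicate bookkeeping — choosing $\mathcal O,\Omega,R_0,L,\varepsilon$ and the $C^2$ interpolation of $W$ in the right order, and verifying $\nabla W$ stays globally Lipschitz — is where the real work lies, but each step is elementary.
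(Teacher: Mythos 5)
Your construction of the negative-energy perturbation $h_0$ does not work, and the error is in the final bookkeeping. Write $\mathcal W(h_0)=E_1(e+h_0)-E_1(e)$ with the modified potential $W$. Since $e$ has finite action and $W(e)=\widetilde W(e)$, the reference term $E_1(e)$ contributes only a bounded constant on $[-L-1,L+1]$; it does not grow like $2Lc_1$. Equivalently, in the decomposition $\mathcal W(h_0)=\bigl[\tilde E_1(e+h_0)-\tilde E_1(e)\bigr]-\int_{\R}\bigl[\widetilde W-W\bigr](e+h_0)$, the map $e+h_0$ sits at $p$ for time $2L$, so the first bracket is at least $2Lc_1-O(1)$, while the gain from the flattening is at most $2L(c_1-\varepsilon)$ plus a bounded amount from the transitions: the two $2Lc_1$ terms cancel exactly, and what remains is (transition costs through the unmodified region between $\mathcal O$ and your ball) $+2L\varepsilon-O(1)$, which has no reason to be negative. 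The spurious term $-2Lc_1$ in your estimate is precisely this double counting. More structurally: since $W\geq0$ and $e+h_0$ coincides with $e$ outside a compact interval, $e+h_0$ is still a connection from $a^-$ to $a^+$, so $E_1(e+h_0)\geq \mathrm{dist}_{\sqrt{2W}}(a^-,a^+)$; flattening $W$ on a ball that lies away from every $a^-$-to-$a^+$ channel cannot force this geodesic distance below $\tilde E_1(e)$ (take, e.g., $\widetilde W$ huge outside a thin neighbourhood of $K$: any excursion to your ball already costs far more than $\tilde E_1(e)$).

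The missing idea, which is how the paper proceeds, is to lower the potential along an \emph{entire} curve $\gamma_0$ joining $a^-$ to $a^+$ whose interior avoids $K$: one keeps $W=\widetilde W$ on a neighbourhood $\mathcal O$ of $K$, makes $W$ positive but small (of the order of $\sup\widetilde W$ near the curve's endpoints, which tends to $0$ as the endpoints approach $a^\pm$) along $\gamma_0((-1,1))$, and with bounded second derivatives so that \eqref{w3b} holds. Then $\int_{\gamma_0}\sqrt{2W}<\tilde E_1(e)$, and the equipartition reparametrization $\gamma$ of $\gamma_0$ satisfies $E_1(\gamma)=\int_{\gamma_0}\sqrt{2W}<E_1(e)$ with $\gamma-e\in H^1(\R;\R^m)$ (exponential decay at the nondegenerate zeros), giving \eqref{negw} with $h_0=\gamma-e$. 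Your discussion of \eqref{nondegw} is salvageable — the two-regime argument you sketch (small $\|h\|_{H^1}$ forces $e+h$ into $\mathcal O$ and minimality applies; otherwise the kinetic term $\tfrac12\|h'\|_{L^2}^2$ dominates $L\|h\|_{L^2}^2$ via \eqref{alter}--\eqref{abc}) is essentially the paper's — but as stated in your final construction it is not carried out for the modified $W$, and the global Lipschitz property of $\nabla W$ is left to a vague further modification rather than built into the construction.
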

\begin{proof}
Let $\gamma_0\in C^2([-1,1];\R^m)$ be a simple regular curve such that $\gamma_0(\pm1)=a^\pm$, and $\gamma_0((-1,1))\cap K=\emptyset$. Given $\epsilon>0$ small enough, let $\mathcal O$ be an open and bounded neighbourhood of $K$ such that $\mathcal O \cap \gamma_0([-1+\epsilon,1-\epsilon])=\emptyset$. We construct a nonnegative potential $ W$ in such a way that
\begin{itemize}
\item $W(u)=\widetilde W(u)$, for $u\in \mathcal O$,
\item  $0<W(u)\leq 2\sup_{ \gamma_0([-1,-1+\epsilon])\cup \gamma_0([1-\epsilon,1])}\widetilde W$, for $u\in \gamma_0((-1,1))$,
\item $W\in C^2(\R^m;\R)$, and the second derivatives of $W$ are bounded in $\R^m$.
\end{itemize}
By taking $\epsilon$ small enough we can ensure that $\int_{\gamma_0} \sqrt{2W}< \tilde E_1(e)=E_1(e)$.

On the other hand, we know by \cite[Proposition 2.1]{book}, that there is an equipartition parametrization of the curve $\gamma_0(s)$, $s\in [-1,1]$. That is, there is an orientation preserving diffeomorphism $\phi:\R \to(-1,1)$, such that the map $\gamma(t)=\gamma_0(\phi(t))$, satisfies $\frac{1}{2}|
\gamma'(t)|^2=W(\gamma(t))$, $\forall t\in\R$. Consequently, it follows that
$$E_1(
\gamma)= \int_\R\Big[\frac{1}{2}| \gamma'|^2+W(\gamma)\Big]=\int_\R|\gamma'|^2=\int_\R\sqrt{2W(\gamma)}|\gamma'|=\int_{\gamma_0} \sqrt{2W}<E_1(e),$$
and $|\gamma-a^-|^2$ (respectively $|\gamma-a^+|^2$) is integrable in a neighbourhood of $-\infty$ (respectively $+\infty$), since the zeros $a^\pm$ of $W$ are nondegenerate, and $W(\gamma)\in L^1(\R)$. This proves that $\gamma-e\in H^1(\R;\R^m)$, and condition  \eqref{negw} is fulfiled.

To complete the proof, it remains to check that \eqref{nondegw} also holds. Let $\beta>0$ be such that $\|h\|_{H^1(\R;\R^m)}^2\leq \beta\Rightarrow e(t)+h(t)\in\mathcal O$, $\forall t\in\R$. Then, when $\|h\|_{L^2(\R;\R^m)}^2\leq \frac{\beta}{1+2L}$, and $\|h'\|_{L^2(\R;\R^m)}^2\leq \frac{2L\beta}{1+2L}$, it is clear in view of \eqref{min22} that $\mathcal W(h)=E_1(e+h)-E_1(e)\geq 0$. Otherwise, when $\|h\|_{L^2(\R;\R^m)}^2\leq \frac{\beta}{1+2L}$, and $\|h'\|_{L^2(\R;\R^m)}^2\geq \frac{2L\beta}{1+2L}$, we have again $\mathcal W(h)\geq 0$  in view of \eqref{alter} and \eqref{abc}. This proves that
$\mathcal W(h)\geq 0$, provided that  $\|h\|_{L^2(\R;\R^m)}^2\leq \frac{\beta}{1+2L}$. \qed

\end{proof}

The following version of Theorem \ref{th1ground} applies when instead of \eqref{w3b}, we assume that
\begin{equation}\label{w3bb}
\exists R_0>0 \text{ such that } \nabla W(u)\cdot u>0, \ \text{ provided that } |u|\geq R_0.
\end{equation}
\begin{corollary}
Let $d\geq 3$. Under assumptions \eqref{w1}, \eqref{w2}, \eqref{ode0}, \eqref{ass2} and \eqref{w3bb}, there exists a solution $u\in C^2(\R^d\times\R;\R^m)$ to system \eqref{system} satisfying  properties (i) and (ii) of Theorem \ref{th1ground}. Property (iii) of Theorem \ref{th1ground} also holds if we assume in addition that the solution $\tilde u$ is bounded.
\end{corollary}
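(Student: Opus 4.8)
The plan is to reduce the case of assumption \eqref{w3bb} to the globally Lipschitz case \eqref{w3b} already covered by Theorem \ref{th1ground}, via a truncation argument on the potential $W$ at large values of $|u|$. First I would fix a radius $R_1>\max(R_0,\sup_t|e(t)|)$ large enough that the heteroclinic $e$, its asymptotic values $a^\pm$, and a neighbourhood of the curve $e(\mathbb R)$ on which $\mathcal W\geq 0$ for small perturbations, all lie well inside $B(0,R_1)$. Then I would construct a modified potential $\widehat W\in C^2(\mathbb R^m;\mathbb R)$ which coincides with $W$ on $B(0,R_1)$, remains nonnegative, has the same zeros $a^\pm$ (still nondegenerate), and whose gradient is globally Lipschitz on $\mathbb R^m$; this is possible precisely because \eqref{w3bb} forces $W$ to be increasing in the radial direction outside $B(0,R_0)$, so one can interpolate to, say, a quadratic growth $\tfrac{c}{2}(|u|-R_1)^2+\text{(lower order)}$ beyond $B(0,R_1)$ without destroying nonnegativity or creating new critical points. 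Since $\widehat W=W$ near $e(\mathbb R)$, the associated effective potential $\widehat{\mathcal W}$ agrees with $\mathcal W$ on a neighbourhood of $0$ in $\mathcal H$, so \eqref{nondegw} and \eqref{negw} persist for $\widehat W$; thus $\widehat W$ satisfies all hypotheses \eqref{w13} and \eqref{ass2}, and Theorem \ref{th1ground} produces a solution $u\in C^2(\mathbb R^d\times\mathbb R;\mathbb R^m)$ of $\Delta u=\nabla\widehat W(u)$ with properties (i), (ii).

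The key step is then the a priori bound $\|u\|_{L^\infty}\leq R_1$, which will force $u$ to actually solve the original system $\Delta u=\nabla W(u)$. To get this I would work with $w(x)=u(x)-e(x_{d+1})$, which by Lemma \ref{AS} tends to $0$ (together with its gradient) as $|x|\to\infty$, so $|u|$ attains its supremum, and it suffices to rule out $\sup|u|>R_1$. I would use the standard device: since $\widehat W$ is nonnegative with $\nabla\widehat W(u)\cdot u>0$ for $|u|\geq R_0$ (this property is inherited from \eqref{w3bb} on $B(0,R_1)$ and can be arranged to persist for the quadratic extension), the function $\varphi=\tfrac12|u|^2$ satisfies $\Delta\varphi=|\nabla u|^2+\nabla\widehat W(u)\cdot u\geq 0$ on the open set $\{|u|>R_0\}$; hence $\varphi$ is subharmonic there, and since $\varphi\to\tfrac12\max(|a^-|^2,|a^+|^2)\leq \tfrac12 R_0^2$ (if $R_0$ is chosen $\geq|a^\pm|$) at infinity and on the boundary of that set, the maximum principle gives $|u|\leq R_0<R_1$ everywhere. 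On $\{|u|\leq R_1\}$ we have $\widehat W=W$, so $u$ solves the original system \eqref{system}; and properties (i), (ii) are unaffected by the modification since they were established for $u$ itself.

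Finally, for property (iii) under the extra assumption that the competitor $\tilde u$ is bounded: I would choose $R_1$ in the construction above larger than $\|\tilde u\|_{L^\infty}$ as well (this is legitimate because the modification is done after $\tilde u$ is given, and $\mathcal E$ only sees $\mathcal W=\widehat{\mathcal W}$ on the relevant range of values), so that $\tilde u$ is also a bounded weak solution of $\Delta\tilde u=\nabla\widehat W(\tilde u)$ with $\tilde u=e+\tilde\xi$, $\tilde\Xi\in\mathcal D^{1,2}_{rad}(\mathbb R^d;\mathcal H)$, $\tilde\Xi\not\equiv0$, $\widehat{\mathcal W}(\tilde\Xi)\in L^1(\mathbb R^d)$; then property (iii) of Theorem \ref{th1ground} applied to $\widehat W$ gives $\mathcal E(u)\leq\mathcal E(\tilde u)$, where $\mathcal E$ is computed with $\widehat{\mathcal W}$, which equals the original $\mathcal E$ on both arguments since all values of $u$ and $\tilde u$ stay in $B(0,R_1)$. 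The main obstacle I anticipate is carrying out the $C^2$ radial modification of $W$ outside $B(0,R_1)$ while simultaneously preserving nonnegativity, the nondegeneracy of the zeros (which is automatic since they are untouched), the global Lipschitz bound on $\nabla\widehat W$, and the sign condition $\nabla\widehat W(u)\cdot u>0$ for large $|u|$ that powers the subharmonicity argument — but all of these are compatible since \eqref{w3bb} already gives the correct radial monotonicity near $\partial B(0,R_1)$, so the interpolation is routine if somewhat technical.
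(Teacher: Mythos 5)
Your overall strategy is the same as the paper's: truncate $W$ outside a large ball so that \eqref{w3b} holds while keeping the sign condition \eqref{w3bb}, apply Theorem \ref{th1ground} to the modified potential, and then use a maximum-principle argument (your subharmonicity computation for $\tfrac12|u|^2$ is essentially the proof of the maximum principle the paper cites from \cite[Theorem 3.2]{ps}) to conclude that the solution takes values where the potential was not modified. However, two steps of your write-up have genuine gaps.

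First, your justification that \eqref{ass2} persists for the modified potential is wrong as stated: from $\widehat W=W$ on $B(0,R_1)$ you cannot conclude that $\widehat{\mathcal W}$ agrees with $\mathcal W$ on a neighbourhood of $0$ in $\mathcal H=L^2(\R;\R^m)$, because a map $h$ with small $L^2$ norm may still send $e+h$ outside $B(0,R_1)$ on a set of small measure, and there $\widehat W\neq W$ (indeed $\widehat W<W$ if $W$ grows superquadratically, which is precisely the case the corollary is designed for). So the persistence of \eqref{nondegw} for $\widehat{\mathcal W}$ is left unproved. The paper's device is different and is what actually closes this point: keep \eqref{w3bb} for the modified potential and use that the projection onto the closed ball $\overline{B(0,R_0-\epsilon)}$ is $1$-Lipschitz and decreases the potential term (radial monotonicity), hence decreases $E_1$; applying \eqref{nondegw} to the projected perturbation, whose $L^2$ norm is not increased, yields $\widehat{\mathcal W}(h)\geq 0$ for $\|h\|_{\mathcal H}\leq\delta$, and the same projection produces an admissible $h_0$ for \eqref{negw} with values in $B(0,R_0)$. (For \eqref{negw} alone you could also simply take $R_1>\|e+h_0\|_{L^\infty}$, legitimate since $h_0\in H^1(\R;\R^m)\hookrightarrow L^\infty$, but this does not help with \eqref{nondegw}.)

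Second, your treatment of property (iii) is logically circular: you choose the truncation radius $R_1$ after the competitor $\tilde u$ is given, so the solution $u$ you construct depends on $\tilde u$, whereas the corollary asserts the existence of one fixed $u$ satisfying $\mathcal E(u)\leq\mathcal E(\tilde u)$ for every bounded competitor. Different radii give a priori different constrained minimization problems, hence possibly different minimizers and minimal values, and you do not compare them. The paper avoids this by applying the maximum principle to the competitor itself: any bounded weak solution $\tilde u$ of the original system with \eqref{w3bb} satisfies $|\tilde u|\leq R_0$, so a single fixed modification outside $B(0,R_0)$ works uniformly, and $\mathcal W(\tilde\Xi)=\widehat{\mathcal W}(\tilde\Xi)$ automatically. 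Your own subharmonicity argument, applied to $\tilde u$ (whose perturbation $\tilde\xi$ decays by the arguments of Lemma \ref{AS}), would repair this without enlarging $R_1$.
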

\begin{proof}
The maximum principle (cf. for instance \cite[Theorem 3.2]{ps}) implies that every bounded solution of \eqref{system} takes its values in the ball $B(0,R_0)$ of $\R^m$. This is  true in particular for the trivial solutions $a^\pm$, for the heteroclinic orbit $e$, and for the solution $\tilde u$ considered in Theorem \ref{th1ground} (iii). Moreover, the map $h_0$ in \eqref{negw} can be taken so that $(e+h_0)(\R)\subset B(0,R_0)$, since the projection onto the closed ball $\overline{B(0,R_0-\epsilon)}$ (with $\epsilon>0$ small enough), reduces the energy $E_1$.

Next, we modify the potential $W$ outside the ball $B(0,R_0)$ in such a way that both \eqref{w3b} and \eqref{w3bb} hold. In view of Theorem \ref{th1ground}, we obtain a bounded solution $u$ corresponding to the modified potential. However, it follows again from the maximum principle that $|u(x)|< R_0$, $\forall x \in \R^{d+1}$. Thus, $u$ is also a solution of \eqref{system} for the initial potential $W$. \qed
\end{proof}

 \subsection{Ground state for $d=2$}
The case $d=2$ is particular, since in view of assumption (\textbf{H2}) of Theorem \ref{th2}, the heteroclinic orbit $e$ must be a nondegenerate local minimum of $\mathcal W$. In this context we need to strengthen assumptions \eqref{w13} and \eqref{ass2}. When $d=2$, we assume that
\begin{subequations}\label{w13bis}
\begin{equation}\label{w1bis}
\text{$W\in C^{2}(\R^m; \R)$ is nonnegative, and has (at least) two zeros $a^-$ and $a^+$,}
\end{equation}
\begin{equation}\label{w1ter}
\text{$W$ is invariant by the reflection $\sigma$ which exchanges $a^\pm$,}
\end{equation}
\begin{equation}\label{w2bis}
\text{the Hessian matrices $D^2 W(a^\pm)$ are positive definite,}
\end{equation}
\begin{equation}\label{w3bbis}
\text{the map $u\mapsto \nabla W(u)$ is globally Lipschitz in $\R^m$.}
\end{equation}
Moreover, we assume the existence of a symmetric heteroclinic orbit $e\in C^2(\R;\R^m)$ connecting $a^-$ to $a^+$:
\begin{equation}\label{ode0bis}
e''(t)=\nabla W(e(t)), \ \lim_{t\to\pm\infty}e(t)=a^\pm, \, e(-t)=\sigma e(t), \, \forall t\in\R.
\end{equation}
\end{subequations}
Without loss of generality we assume that $e(0)=0$. We shall work in the Hilbert space
$$\mathcal H_\sigma=\{ h\in L^2(\R;\R^m): \, h(-t)=\sigma h(t) \text{ for a.e. } t\in\R\}\subset L^2(\R;\R^m),$$
of symmetric maps. The symmetry of $h\in\mathcal H_\sigma$ removes the degeneracy due to the translation
invariance of the functional $\mathcal W$.
Finally, we assume that
\begin{subequations}\label{ass2bis}
\begin{equation}\label{nondegwbis}
\text{$\mathcal W(h)\geq \alpha \|h\|^2_{\mathcal H_\sigma}$ holds for $h\in \mathcal H_\sigma$ such that $\|h\|_{\mathcal H_\sigma}\leq \delta$ (with $\alpha,\delta>0$ small enough),}
\end{equation}
\begin{equation}\label{negwbis}
\text{and $\mathcal W(h_0)<0$, for some $h_0\in H^1(\R;\R^m)\cap \mathcal H_\sigma$. }
\end{equation}
\end{subequations}

\begin{remark}\label{Rnondeg}
Assumption \eqref{nondegwbis} holds provided that the heteroclinic orbit $e$ is nondegenerate in the sense that $0$ is a simple eigenvalue of the linearized operator $T:W^{2,2}(\R;\R^m)\rightarrow L^2(\R;\R^m)$, $T\varphi=-\varphi^{\prime\prime}+D^2W(e)\varphi$ (cf. \cite{scha}, and in particular Lemma 4.5 therein).  Indeed, the nondegeneracy of $e$ implies the existence of constants $\alpha,\beta>0$ such that
\begin{equation}\label{min22bis}
h\in \mathcal H_{\sigma} \cap H^1(\R;\R^m), \  \|h\|^2_{H^1(\R;\R^m)}\leq\beta \Rightarrow \mathcal W(h)\geq \alpha  \|h\|_{H^1(\R;\R^m)}^2.
\end{equation}
Then, when $\|h\|_{L^2(\R;\R^m)}^2\leq \frac{\beta}{2L+2\alpha+1}$, and $\|h'\|_{L^2(\R;\R^m)}^2\leq \frac{(2L+2\alpha)\beta}{2L+2\alpha+1}$, it is clear in view of \eqref{min22bis} that $\mathcal W(h)\geq  \alpha  \|h\|_{L^2(\R;\R^m)}^2$. Otherwise, when $\|h\|_{L^2(\R;\R^m)}^2\leq \frac{\beta}{2L+2\alpha+1}$, and $\|h'\|_{L^2(\R;\R^m)}^2\geq \frac{(2L+2\alpha)\beta}{2L+2\alpha+1}$, we have again $\mathcal W(h)\geq  \alpha  \|h\|_{L^2(\R;\R^m)}^2$  in view of \eqref{alter} and \eqref{abc}. This proves that \eqref{nondegwbis} holds with $\delta=\big(\frac{\beta}{2L+2\alpha+1}\big)^{\frac{1}{2}}$. Finally, we refer to \cite{jen} and \cite[Theorem 4.3 and Remark 4.4]{scha} for examples of potentials having such nondegenerate orbits.
\end{remark}

In Proposition \ref{propo1bis}, we provide examples of potentials $W$ satisfying assumptions \eqref{w13bis} and \eqref{ass2bis}.
When $d=2$, and \eqref{w13bis} as well as \eqref{ass2bis} hold, it is clear that the potential $\mathcal W:\mathcal H_\sigma\to (-\infty,+\infty]$ satisfies the assumptions (\textbf{H1})-(\textbf{H4}) of Theorem \ref{th2} applied to the Hilbert space $\mathcal H_\sigma$. Therefore, we deduce the existence of a radial map $V\in\mathcal A$ solving the minimization problem \eqref{minpb2}, in the class
\begin{equation}\label{class2bis}
\mathcal A=\left\lbrace U\in H^{1}_{rad}(\R^2;\mathcal H_\sigma) :   U\neq 0, \ \mathcal W(U)\in L^1(\R^2), \ \ \int_{\R^2}\mathcal W(U)\leq 0 \right\rbrace.
\end{equation}
Moreover, we have $\int_{\R^2}\mathcal W(V)=0$, and $V\not\equiv 0$.  From the minimizer $V$, we recover a solution $u$ to system \eqref{system} satisfying the properties of Theorem \ref{th2ground} below. In what follows, we write $x=(x_1,x_2, x_3)=(y,x_3)\in\R^3$, with $y=(x_1,x_2)\in\R^2$.

\begin{theorem}\label{th2ground}
Under assumptions \eqref{w13bis} and \eqref{ass2bis}, there exists a solution $u\in C^2(\R^3;\R^m)$ to system \eqref{system} such that
\begin{itemize}
\item[(i)]  $u(y,x_3)=u(z,x_3)$, whenever $|y|=|z|$ (with $,y,z\in \R^2$), and  $u(y,-x_3)=\sigma u(y,x_3)$, $\forall (y,x_3)\in\R^2\times\R$.
\item[(ii)] $\lim_{|x|\to\infty}(u(x)-e(x_3))=0$ (with $x=(x_1,x_2, x_3)$), and $u(x)-e(x_3)\not\equiv 0$. In particular $\lim_{y\in\R^2, |y|\to\infty}u(y,x_3)=e(x_3)$.
\item[(iii)] Let $\tilde u$ be a weak solution to system \eqref{system} such that $\tilde u(x)=e(x_{3})+\tilde \xi(x)$, with  $\tilde\xi(x)=[\tilde\Xi(y)](x_{3})$, $\tilde\Xi\in H^{1,2}_{rad}(\R^2;\mathcal H_\sigma)$, $\tilde\Xi \not\equiv 0$, and $\mathcal W(\tilde\Xi)\in L^1(\R^2)$. Then, we have $\mathcal E(u)\leq\mathcal E( \tilde u)$, where $\mathcal E$ is defined in \eqref{renormene}.
\end{itemize}
\end{theorem}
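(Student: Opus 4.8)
The plan is to mimic closely the proof of Theorem~\ref{th1ground}, replacing the $d\geq 3$ ingredients with their $d=2$ counterparts from Theorem~\ref{th2}, and working throughout in the Hilbert space $\mathcal H_\sigma$ so that the symmetry $h(-t)=\sigma h(t)$ is preserved. First I would record that under \eqref{w13bis} and \eqref{ass2bis} the potential $\mathcal W$ restricted to $\mathcal H_\sigma$ meets hypotheses (\textbf{H1})--(\textbf{H4}) of Theorem~\ref{th2}: (\textbf{H1}) is the weak lower semicontinuity and boundedness below already established for $\mathcal W$ (the infimum $-\mathcal W_m=-E_1(e)$ being attained only at the heteroclinic up to translation, which the symmetry constraint kills), (\textbf{H2}) is exactly \eqref{nondegwbis}, (\textbf{H3}) follows from \eqref{negwbis} together with $\mathcal W(0)=0$, and (\textbf{H4}) from \eqref{abc}. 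Hence Theorem~\ref{th2} yields a radial minimizer $V\in H^1_{rad}(\R^2;\mathcal H_\sigma)$ with $\int_{\R^2}\mathcal W(V)=0$ and $V\not\equiv 0$; set $v(x)=[V(y)](x_3)$, which then automatically satisfies $v(y,-x_3)=\sigma v(y,x_3)$.

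Next I would run the analog of Lemma~\ref{recover}: since $V\in H^1(\R^2;\mathcal H_\sigma)$ one has $v\in H^1(B_2(0,R)\times\R;\R^m)$ for every $R$, using the isomorphism $L^2(B_2(0,R)\times\R;\R^m)\simeq L^2(B_2(0,R);L^2(\R;\R^m))$ and the bound \eqref{abc} to control $v_{x_3}$ in $L^2$ via $\int_{B_2(0,R)}\mathcal W(V)+L\|v\|_{L^2}^2$; and any $\sigma$-symmetric radial test function $\phi\in H^1_{0,rad}(B_2(0,R)\times\R;\R^m)$ lifts to $\Phi\in H^1_{rad}(\R^2;\mathcal H_\sigma)$ so that $V+\lambda\Phi$ stays in the class and $\mathcal W(V+\lambda\Phi)\in L^1(\R^2)$. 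Then comes the Lagrange-multiplier step, the analog of Lemma~\ref{recover2}: here the constraint is $\int_{\R^2}\mathcal W(U)\le 0$ rather than $\le -1$, and the scaling that normalizes the constraint is the two-dimensional one $U(y)\mapsto U(\kappa y)$, under which $\int\mathcal W$ is \emph{not} homogeneous of a fixed degree. This is where I expect the main obstacle: one cannot simply rescale to restore $\int\mathcal W=0$ when a perturbation makes it slightly negative, because the Dirichlet energy $\int\|\nabla U\|^2$ is scale-invariant in dimension two. The fix is to argue as in the proof of Theorem~\ref{th2} (the Pohozaev/comparison-map computation with $V_\kappa$): since $\int_{\R^2}\mathcal W(V)=0$ exactly, and since for $\phi$ with $G_1(\phi)<0$ the perturbation $V+\lambda\Phi$ already satisfies $\int\mathcal W(V+\lambda\Phi)<0$ for small $\lambda>0$, it lies in $\mathcal A$ directly, so minimality of $V$ forces $G_2(\phi)\geq 0$; applying this to $\pm\phi$ whenever $G_1(\phi)=0$ gives $G_2(\phi)=0$, hence $G_1$ and $G_2$ are proportional with a constant $\mu>0$ independent of $R$. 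The nontriviality $G_2\not\equiv 0$ uses that if $\Delta_y v=0$ on $B_2(0,R)\times\R$ with $\nabla_y v\in L^2$ then $v$ is $y$-independent there and $\nabla_y v\equiv 0$, contradicting $\int_{B_2(0,R)}\|\nabla V\|^2>0$ for $R$ large.

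With $\mu>0$ in hand, set $\tilde v(x)=v(\mu^{-1/2}y,x_3)$; by the principle of symmetric criticality (\cite{palais}) applied to $\sigma$-symmetric radial-in-$y$ test functions, $\tilde v$ solves $\Delta\tilde v=\nabla W(e(x_3)+\tilde v)-\nabla W(e(x_3))$ weakly on $\R^3$, so $u=e(x_3)+\tilde v$ weakly solves \eqref{system}; the symmetry in (i) is inherited from $V\in\mathcal H_\sigma$ and radiality in $y$. The asymptotics in (ii) follow verbatim from the argument of Lemma~\ref{AS}: $\lim_{|x|\to\infty}\|\tilde v\|_{H^1(B(x,1))}=0$ (splitting according to whether $|y_n|\to\infty$, using $\lim_{|y|\to\infty}\|V(y)\|_{L^2(\R)}=0$, or $|y_n|$ bounded and $|z_n|\to\infty$, using $\tilde v\in L^2(B_2(0,R+1)\times\R)$), then elliptic regularity and bootstrap since $u\mapsto\nabla W(u)$ is globally Lipschitz, giving $\tilde v\in C^2$ and $\tilde v,\nabla\tilde v\to 0$ at infinity; $\tilde v\not\equiv 0$ because $V\not\equiv 0$. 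Finally, for the variational characterization (iii), I would establish the two-dimensional Pohozaev identity for radial maps (the $d=2$ analog of \eqref{hamground3}), namely that any such weak solution $\tilde u=e+\tilde\Xi$ satisfies $\int_{\R^2}\mathcal W(\tilde\Xi)=0$, by the scaling-derivative computation $\frac{d}{d\kappa}\big|_{\kappa=1}\big[\int\mathcal W(\tilde\Xi(\kappa\cdot))\big]=0$ combined with the equation; since both $\tilde V$ and $\tilde\Xi$ then have $\int\mathcal W=0$, the renormalized energy reduces to $\mathcal E(u)=\frac12\int_{\R^2}\|\nabla\tilde V\|^2_{\mathcal H_\sigma^2}$ and $\mathcal E(\tilde u)=\frac12\int_{\R^2}\|\nabla\tilde\Xi\|^2_{\mathcal H_\sigma^2}$, and $\mathcal E(u)\leq\mathcal E(\tilde u)$ is immediate from $T=\frac12\int\|\nabla V\|^2$ being the infimum over $\mathcal A$ together with $\tilde\Xi\in\mathcal A$ (which holds because $\tilde\Xi\not\equiv 0$ and $\int\mathcal W(\tilde\Xi)=0$). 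The delicate point to get right is that, unlike in $d\geq3$, here the constraint value $0$ is rigid under the natural scaling, so the whole argument hinges on the Pohozaev identity holding with equality in both directions.
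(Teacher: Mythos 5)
Your overall strategy mirrors the paper's proof closely: extract the minimizer $V$ from Theorem~\ref{th2} in $\mathcal H_\sigma$, pass to $v(x)=[V(y)](x_3)$, establish that $v$ minimizes $S$ under the constraint $F\le 0$ in the equivariant class, produce a Lagrange multiplier $\mu>0$, rescale, and derive the asymptotics and the variational characterization via the $d=2$ Pohozaev identity $\int_{\R^2}\mathcal W(\tilde\Xi)=0$. The handling of parts (i), (ii), and (iii) is essentially the paper's.

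However, the Lagrange-multiplier step has a genuine gap. You correctly observe that the $d\ge 3$ rescaling trick (used to show $G_1(\phi)=0\Rightarrow G_2(\phi)=0$) fails in dimension two because the Dirichlet term is scale-invariant. But the fix you propose does not close the gap. You establish only the one-sided implication $G_1(\phi)<0\Rightarrow G_2(\phi)\ge 0$ (and by linearity $G_1(\phi)>0\Rightarrow G_2(\phi)\le 0$), and then assert that ``applying this to $\pm\phi$ whenever $G_1(\phi)=0$ gives $G_2(\phi)=0$.'' This does not follow: if $G_1(\phi)=0$, neither $\phi$ nor $-\phi$ satisfies the hypothesis $G_1<0$, and the second-order behaviour of $F$ along $v\pm\lambda\phi$ is unknown, so you cannot place $v\pm\lambda\phi$ in the admissible class $\mathcal C$. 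The standard way to deduce $\ker G_1\subset\ker G_2$ from the one-sided implication is to perturb: for $G_1(\phi)=0$ take $\psi$ with $G_1(\psi)<0$, apply the implication to $\phi+\epsilon\psi$ and $-\phi+\epsilon\psi$, and let $\epsilon\to 0^+$. But this requires knowing a priori that $G_1\not\equiv 0$, and this is precisely what you never prove. You instead prove $G_2\not\equiv 0$, which is both trivial (just test with $\phi=v$: $G_2(v)=\int_{\R^2}\|\nabla V\|^2_{\mathcal H^2_\sigma}>0$) and not the relevant nontriviality. If $G_1\equiv 0$ were possible, no $\mu>0$ with $\mu G_2+G_1=0$ would exist, and the whole construction would collapse.

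The substantive content of the paper's Lemma~\ref{recover2bis}(i) is exactly the statement $G_1\not\equiv 0$, and it requires a nontrivial argument. Assuming $G_1\equiv 0$, one gets that $x_3\mapsto e(x_3)+[V(y)](x_3)$ is a heteroclinic connecting $a^-$ to $a^+$ for a.e.\ $y$; one then introduces $\zeta(r)=\int_\R W(e+[V(r)])$, computes $\zeta'$ in two ways (once directly, once via the equipartition identity for these heteroclinics) to conclude $\zeta'\equiv 0$; since $\zeta(r)\to\int_\R W(e)$ as $r\to\infty$, the equipartition then forces $\mathcal W(V(r))=0$ for a.e.\ $r$, and the nondegeneracy hypothesis \eqref{nondegwbis} yields $V\equiv 0$, contradicting $V\not\equiv 0$. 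This step, which is where the symmetry constraint and the nondegeneracy \eqref{nondegwbis} are truly used, is absent from your proposal and needs to be supplied.
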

\begin{proof}

Let $V\in\mathcal A$, be the minimizer provided by Theorem \ref{th2}. We proceed as in the proof of Theorem \ref{th1ground}, to recover the solution $u$  from the minimizer $V$.

Let $H^1_{equ}(\R^3;\R^m)$ be the subspace of equivariant maps $w \in  H^1(\R^3;\R^m)$ which are
\begin{itemize}
\item radial with respect to $y$, i.e. $ w( gy,x_3)=w(y,x_3)$, $\forall g \in O(\R^2)$, and for a.e. $(y,x_3)\in \R^3$,
\item symmetric with respect to $x_3$, i.e. $w(y,-x_3)=\sigma w(y,x_3)$, for a.e. $(y,x_3)\in \R^3$.
\end{itemize}

By adjusting the proof of Lemma \ref{recover}, we first establish the following result.
\begin{lemma}\label{recoverbis}
 Let $v(x):=[V(y)](x_3)$. Then, we have $v\in H^1_{equ}(\R^3;\R^m)$. On the other hand, let $\phi\in  H^1_{equ}(\R^3;\R^m)$, and let $[\Phi(y)](x_3):=\phi(x)$. Then, for every $\lambda\in\R$, $V+\lambda\Phi$ belongs to $H^1_{rad}(\R^2; \mathcal H_\sigma)$, and $\mathcal W(V+\lambda\Phi)\in L^1(\R^2)$.
\end{lemma}

In view of Lemma \ref{recoverbis}, $v$ is a minimizer of the functional $$S(w)=\int_{\R^3}|\nabla_y w(x)|^2 dx,$$ in the class $$\mathcal C=\{w=v+\phi: \phi \in  H^1_{equ}(\R^3;\R^m): w\not\equiv 0, \, F(w)\leq 0\},$$
where $$F(w)=\int_{\R^3} \Big(\frac{1}{2}|w_{x_3}(x)|^2+f(x_3,w(x))\Big)dx=\int_{\R^2} \mathcal W(V+\Phi)dy.$$
Next, we define the continuous linear functionals on $H^1_{equ}(\R^3;\R^m)$:
$$G_1(\phi)=\int_{\R^3}\big(v_{x_3}(x)\cdot\phi_{x_3}(x)+\big[\nabla W(e(x_3)+v(x))-\nabla W(e(x_3))\big]\cdot\phi(x)\big) dx,$$
and $$G_2(\phi)=\int_{\R^3}(v_{x_1}\cdot \phi_{x_1}+v_{x_2}\cdot \phi_{x_2} )dx,$$
where $\phi\in H^1_{equ}(\R^3;\R^m)$. One can see that
$$\frac{d}{d\lambda}F(v+\lambda\phi)\Big|_{\lambda=0}=G_1(\phi).$$

At this stage, we prove the analog of Lemma \ref{recover2}:
\begin{lemma}\label{recover2bis}(Lagrange multiplier)
\begin{itemize}\item[(i)] We have $G_2\not\equiv 0$.
\item[(ii)] There exists a constant $\mu>0$ such  that $\mu G_2+G_1=0$.
\end{itemize}
\end{lemma}
\begin{proof}
(i) Assuming by contradiction that $G_1\equiv 0$, then for a.e. $y\in\R^2$, the map $V(y):x_3\mapsto v(y,x_3)$ belongs to $H^1(\R;\R^m)$, and the map $\R\ni x_3\mapsto e(x_3)+v(y,x_3)$ is a heteroclinic orbit connecting $a^-$ to $a^+$.
In addition, since every heteroclinic orbit converges at $\pm\infty$ exponentially fast  to $a^\pm$, it follows that $V(y)\in H^2(\R;\R^m)$.
Consequently, writing by abuse of notation $V(r)$ instead of $V(y)$, where $r = |y|$, $y\in\R^2$, there exists a negligible set $N\subset(0,\infty)$, such that  for $r\in (0,\infty)\setminus N$:
\begin{itemize}
\item $V(r)\in H^2(\R;\R^m)$, and $\R\ni x_3\mapsto e(x_3)+[V(r)](x_3)$ is a heteroclinic orbit connecting $a^-$ to $a^+$: $e''(x_3)+\frac{d^2 [V(r)]}{d x_3^2}(x_3)=\nabla W(e(x_3)+[V(r)](x_3))$, $\forall x_3\in\R$.
\item $\lim_{s\to r}\frac{V(r+s)-V(r)}{s-r}=V_r(r)$ holds in $L^2(\R;\R^m)$, where $V_r$ is the weak derivative of the map $r\mapsto V(r)$.
\end{itemize}
Next, we introduce the  function $(0,\infty)\ni r \mapsto \zeta(r)=\int_\R W\big(e(x_3)+[V(r)](x_3)\big)d x_3$. By the mean value theorem we have
\begin{equation}\label{proceed}
\zeta(s)-\zeta(r)=\int_\R p_{r,s}(x_3)\cdot\big([V(s)](x_3)-[V(r)](x_3)\big)dx_ 3,
\end{equation}
with $p_{r,s}(x_3)=\nabla W \big(e(x_3)+[V(r)](x_3)+c_{r,s}(x_3) ([V(s)](x_3)-[V(r)](x_3) ) \big)$, and $0\leq c_{r,s}(x_3)\leq 1$. Moreover, one can see using \eqref{w3bbis} that the functions $p_{r,s}$ are uniformly bounded in $L^2(\R;\R^m)$, provided that $r,s\in [a,b]$, with $0<a<b<\infty$.
This implies that the function $\zeta$ is absolutely continuous on $[a,b]$. We also compute for $r\in (0,\infty)\setminus N$:
\begin{align}\label{comb1}
\lim_{s\to r}\frac{\zeta(s)-\zeta(r)}{s-r}&=\lim_{s\to r}\int_\R p_{r,s}(x_3)\cdot\frac{[V(s)](x_3)-[V(r)](x_3)}{s-r}dx_ 3 \nonumber \\
&=\int_\R \nabla W \big(e(x_3)+[V(r)](x_3)\big)\cdot [V_r(r)](x_3)dx_ 3,
\end{align}
since in view of \eqref{w3bbis}, $p_{r,s}\to \nabla W (e+[V(r)])$ in $L^2(\R;\R^m)$, as $s\to r$.

On the other hand, for every $r \in(0,\infty)\setminus N$, the heteroclinic orbit $\R\ni x_3\mapsto e(x_3)+[V(r)](x_3)$ satisfies the equipartition relation:
$$W\big(e(x_3)+[V(r)](x_3)\big)=\frac{1}{2}\Big|e'(x_3)+\frac{d [V(r)]}{dx_3}(x_3)\Big|^2, \, \forall x_3\in\R,$$
thus we have for $r,s \in(0,\infty)\setminus N$:
\begin{align*}
\zeta(s)-\zeta(r)&=\frac{1}{2}\int_\R \Big[\Big| e'+\frac{ d[V(s)]}{dx_3}\Big|^2-\Big|e'+ \frac{ d[V(r)]}{dx_3}\Big|^2\Big]dx_3
\\
&=\frac{1}{2}\int_\R \Big[\frac{ d([V(s)]-[V(r)])}{dx_3}\Big]\cdot \Big[2 e'+ \frac{ d([V(s)]+[V(r)])}{dx_3}\Big]dx_3\\
&=-\frac{1}{2}\int_\R [V(s)-V(r)]\cdot [\nabla W(e+V(s))+\nabla W(e+V(r))]dx_3,
\end{align*}
and
\begin{equation}\label{comb2}
\lim_{(0,\infty)\setminus N\ni s\to r}\frac{\zeta(s)-\zeta(r)}{s-r}=-\int_\R \nabla W \big(e(x_3)+[V(r)](x_3)\big)\cdot [V_r(r)](x_3)dx_ 3,
\end{equation}
since using \eqref{w3bbis}, one can see that $ \nabla W(e+V(s))\to\nabla W(e+V(r))$ in $L^2(\R;\R^m)$, as $s\to r$.
In view of \eqref{comb1} and \eqref{comb2}, we deduce that $\lim_{s\to r}\frac{\zeta(s)-\zeta(r)}{s-r}=0$, for $r \in(0,\infty)\setminus N$, and consequently the function $\zeta$ is constant. Moreover, proceeding as in \eqref{proceed}, and writing
\begin{equation}\label{proceedbis}
\zeta(r)-\int_\R W(e(x_3))dx_3=\int_\R p_{r}(x_3)\cdot[V(r)](x_3)dx_ 3,
\end{equation}
with $p_{r}(x_3)=\nabla W \big(e(x_3)+c_{r}(x_3)[V(r)](x_3)  \big)$, and $0\leq c_{r}(x_3)\leq 1$, one can see as previously
that $\lim_{r\to\infty}\zeta(r)=\int_\R W(e)$. Thus, $\zeta(r)\equiv \int_\R W(e)$ . In view of the equipartition relation, this implies that $\mathcal W(V(r))=0$, $\forall r \in (0,\infty)\setminus N$. Therefore, using \eqref{nondegwbis}, we conclude  that $V\equiv 0$, which is a contradiction.

(ii) It is clear that $G_2\not\equiv 0$, since $G_2(v)=\int_{\R^2}\|\nabla V(y)\|^2_{\mathcal H_\sigma^2}dy >0$. Moreover, if $G_1(\phi)<0$, we have $v+\lambda \phi\in \mathcal C$, for $0<\lambda\ll 1$, and thus $G_2(\phi)\geq 0$. Consequently, there exists a constant $\mu>0$ such  that $\mu G_2+G_1=0$. \qed

\end{proof}
To complete the proof of Theorem \ref{th2ground}, we proceed exactly as in the higher dimensional case. One can see that the map $v$ solves \eqref{prel}, while the map $u(x)=\tilde v(x)+e(x_{3})$, with $\tilde v(x)=v(\mu^{-\frac{1}{2}}y,x_{3})$, is a solution to system \eqref{system}. On the other hand, the asymptotic properties of $u$ are established as in Lemma \ref{AS}. Finally,  let $\tilde u$ be a weak solution to system \eqref{system} such that $\tilde u(x)=e(x_{3})+\tilde \xi(x)$, with  $\tilde\xi(x)=[\tilde\Xi(y)](x_{3})$,  $\tilde\Xi\in H^{1,2}_{rad}(\R^2;\mathcal H_\sigma)$, $\tilde\Xi \not\equiv 0$, and $\mathcal W(\tilde\Xi)\in L^1(\R^2)$. Then, setting $[\tilde V(y)](x_{3})=\tilde v(x_1,x_2,x_3)$, we have
\begin{align*}
\mathcal E(u)=\frac{1}{2}\int_{\R^{2}}\|\nabla\tilde V\|^2_{\mathcal H^2}=\frac{1}{2}\int_{\R^{2}}\|\nabla V\|^2_{\mathcal H^2}\leq \frac{1}{2}\int_{\R^{2}}\|\nabla\tilde \Xi\|^2_{\mathcal H^2}=\mathcal E(\tilde u),
\end{align*}
since $\tilde V(y)=V(\mu^{-\frac{1}{2}}y)$, and $\int_{\R^2}\mathcal W(\tilde\Xi)=\int_{\R^2}\mathcal W(\tilde V)=0$, in view of \eqref{hamground3}. \qed

\end{proof}

Now, we provide examples of potentials satisfying the assumptions of Theorem \ref{th2ground}.
\begin{proposition}\label{propo1bis}
Let $\widetilde W\in C^2(\R^m;\R)$ be a nonnegative potential vanishing only at the points $a^-$ and $a^+$, and satisfying \eqref{w1ter}, \eqref{w2bis} as well as $\liminf_{|u|\to\infty}\widetilde W(u)>0$. Let also $e\in C^2(\R;\R^m)$ be a nondegenerate symmetric minimizing heteroclinic orbit\footnote{We refer to Remark \ref{Rnondeg} and to \cite{antonop} for the existence of such an orbit.} connecting $a^-$ to $a^+$:
\begin{subequations}
\begin{equation}
 e''(t)=\nabla \widetilde W(e(t)), \ \lim_{t\to\pm\infty}e(t)=a^\pm,  \, e(-t)=\sigma e(t), \, \forall t\in\R,
 \end{equation}
 \begin{equation}\label{min22ter}
  \int_\R\Big[\frac{1}{2}| e'|^2+\widetilde W(e)\Big] \leq\int_\R \Big[\frac{1}{2}| e'+h'|^2+\widetilde W(e+h)\Big], \forall h \in H^1(\R;\R^m),
  \end{equation}

\begin{equation}
\text{$\dim\ker T=1$, where $T:W^{2,2}(\R;\R^m)\rightarrow L^2(\R;\R^m)$, $T\varphi=-\varphi^{\prime\prime}+D^2\widetilde W(e)\varphi$.}
 \end{equation}
  \end{subequations}
Then, there exists an open neighbourhood $\mathcal O$ of the compact set $K=e(\R)\cup\{a^\pm\}$, and a nonnegative potential $ W\in C^2(\R^m;\R)$ coinciding with $\widetilde W$ in $\mathcal O$, and satisfying assumptions \eqref{w13bis} and \eqref{ass2bis}.
\end{proposition}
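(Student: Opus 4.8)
The plan is to transcribe the proof of Proposition \ref{propo1}, performing every construction $\sigma$-equivariantly so that the comparison orbit lands in $\mathcal H_\sigma$, and then to deduce the coercivity \eqref{nondegwbis} from Remark \ref{Rnondeg} instead of establishing only \eqref{nondegw} by hand. Throughout, $K=e(\R)\cup\{a^\pm\}$ is compact and $\sigma$-invariant (since $e(-t)=\sigma e(t)$ and $\sigma$ exchanges $a^\pm$).

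First I would build a symmetric connecting curve. As $\sigma$ is an orthogonal reflection exchanging the distinct points $a^\pm$, its fixed-point set $\mathrm{Fix}\,\sigma$ is an affine hyperplane of dimension $m-1\geq 1$; being unbounded, it cannot be contained in the bounded set $K$, so there is a base point $b\in\mathrm{Fix}\,\sigma\setminus K$. I would then construct a simple regular $C^2$ arc $\gamma_0:[0,1]\to\R^m$ with $\gamma_0(0)=b$, $\gamma_0(1)=a^+$, leaving $b$ in the direction of the $(-1)$-eigenspace of $\sigma$ (i.e. normal to $\mathrm{Fix}\,\sigma$) and with $\gamma_0((0,1])$ lying strictly on the side of $\mathrm{Fix}\,\sigma$ containing $a^+$ and disjoint from $K$. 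Extending by $\gamma_0(-s):=\sigma\gamma_0(s)$ then yields a simple regular $C^2$ curve $\gamma_0:[-1,1]\to\R^m$ with $\gamma_0(-s)=\sigma\gamma_0(s)$, $\gamma_0(\pm1)=a^\pm$, and $\gamma_0((-1,1))\cap K=\emptyset$ (the two reflected arcs meet only at $b\in\mathrm{Fix}\,\sigma$, and the matching of one- and two-jets at $s=0$ is arranged by the normal-direction condition). Choosing $\epsilon>0$ small and a $\sigma$-invariant bounded neighbourhood $\mathcal O$ of $K$ with $\mathcal O\cap\gamma_0([-1+\epsilon,1-\epsilon])=\emptyset$, I would construct $W$ exactly as in Proposition \ref{propo1} (nonnegative, equal to $\widetilde W$ on $\mathcal O$, small and positive on $\gamma_0((-1,1))$, $C^2$ with bounded second derivatives, so that \eqref{w3bbis} holds), and then symmetrize it, replacing $W$ by $\tfrac12(W+W\circ\sigma)$. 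Since $\widetilde W$, $\mathcal O$, $K$, $\gamma_0$ are all $\sigma$-invariant, this preserves every listed property and delivers \eqref{w1bis}, \eqref{w1ter}, \eqref{w2bis}, \eqref{w3bbis}; moreover $e$, being unchanged and satisfying $e''=\nabla\widetilde W(e)=\nabla W(e)$ near $e(\R)$, remains a symmetric heteroclinic orbit for $W$, giving \eqref{ode0bis}, and $\liminf_{|u|\to\infty}W(u)>0$ is inherited.

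Next, for \eqref{negwbis}, I would run the equipartition-parametrization argument of Proposition \ref{propo1}, using \cite[Proposition 2.1]{book} to get an orientation-preserving diffeomorphism $\phi:\R\to(-1,1)$ with $\tfrac12|\gamma'|^2=W(\gamma)$ for $\gamma:=\gamma_0\circ\phi$. Since $W$ and $|\gamma_0'|$ are $\sigma$-invariant and $\sigma$ is an isometry, $\phi$ solves an autonomous ODE $\phi'=g(\phi)$ with $g$ even; normalizing the admissible translation so that $\phi(0)=0$, uniqueness for this IVP forces $\phi(-t)=-\phi(t)$, whence $\gamma(-t)=\sigma\gamma(t)$, i.e. $\gamma\in e+\mathcal H_\sigma$. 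Taking $\epsilon$ small so that $\int_{\gamma_0}\sqrt{2W}<E_1(e)$ gives $E_1(\gamma)=\int_\R|\gamma'|^2=\int_{\gamma_0}\sqrt{2W}<E_1(e)$, and the nondegeneracy of the zeros $a^\pm$ makes $|\gamma-a^\pm|^2$ integrable near $\pm\infty$; hence $h_0:=\gamma-e\in H^1(\R;\R^m)\cap\mathcal H_\sigma$ with $\mathcal W(h_0)=E_1(\gamma)-E_1(e)<0$, which is \eqref{negwbis}.

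Finally, for \eqref{nondegwbis}: because $W=\widetilde W$ on the neighbourhood $\mathcal O$ of $e(\R)$, one has $D^2W(e)=D^2\widetilde W(e)$, so the linearized operator at $e$ for $W$ equals $T$, and $e$ is a nondegenerate symmetric heteroclinic orbit for $W$ in the sense of Remark \ref{Rnondeg}; together with the globally Lipschitz property \eqref{w3bbis}, the argument of Remark \ref{Rnondeg} applies verbatim and produces $\alpha,\delta>0$ with $\mathcal W(h)\geq\alpha\|h\|_{\mathcal H_\sigma}^2$ for $\|h\|_{\mathcal H_\sigma}\leq\delta$, i.e. \eqref{nondegwbis}. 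This establishes \eqref{w13bis} and \eqref{ass2bis}. The only genuinely delicate step is the $\sigma$-equivariant construction of $\gamma_0$ — locating $b\in\mathrm{Fix}\,\sigma\setminus K$ and gluing the two reflected arcs into a single simple $C^2$ curve meeting $\mathrm{Fix}\,\sigma$ only at $b$; everything else is a routine transcription of Proposition \ref{propo1} plus an invocation of Remark \ref{Rnondeg}.
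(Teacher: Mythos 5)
Your proposal is correct and follows essentially the same route as the paper's (very terse) proof, which simply says to take a symmetric curve $\gamma_0$ and then proceed as in Proposition \ref{propo1}. You have usefully filled in what the paper elides: the explicit $\sigma$-equivariant construction of $\gamma_0$ by reflecting a half-arc from a base point in $\mathrm{Fix}\,\sigma\setminus K$, the symmetrization $W\mapsto\tfrac12(W+W\circ\sigma)$, the evenness argument showing the equipartition reparametrization is itself $\sigma$-equivariant so that $\gamma-e\in\mathcal H_\sigma$, and the explicit appeal to Remark \ref{Rnondeg} for the coercivity \eqref{nondegwbis} — which is indeed where the extra hypothesis $\dim\ker T=1$ is used.
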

\begin{proof}
We consider a simple regular symmetric curve $\gamma_0\in C^2([-1,1];\R^m)$ such that $\gamma_0(\pm1)=a^\pm$, and $\gamma_0((-1,1))\cap K=\emptyset$. Then, we proceed as in the proof of Proposition \ref{propo1}. \qed
\end{proof}

As in the higher dimensional case, we can show that the following version of Theorem \ref{th2ground} applies when instead of \eqref{w3bbis}, we assume that \eqref{w3bb} holds.

\begin{corollary}
Under assumptions \eqref{w1bis}, \eqref{w1ter}, \eqref{w2bis}, \eqref{ode0bis}, \eqref{ass2bis} and \eqref{w3bb}, there exists a solution $u\in C^2(\R^3;\R^m)$ to system \eqref{system} satisfying properties (i) and (ii) of Theorem \ref{th2ground}. Property (iii) of Theorem \ref{th2ground} also holds if we assume in addition that the solution $\tilde u$ is bounded.
\end{corollary}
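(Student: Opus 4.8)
The plan is to reduce to Theorem~\ref{th2ground} by modifying $W$ far from the origin, following verbatim the scheme of the previous Corollary and paying attention only to the extra symmetry constraint imposed by \eqref{w1ter}. First I would invoke the maximum principle (\cite[Theorem 3.2]{ps}): under \eqref{w3bb}, every bounded entire solution of \eqref{system} takes its values in the closed ball $\overline{B(0,R_0)}\subset\R^m$. This applies to the constant solutions $a^\pm$ (note $|a^\pm|<R_0$, since $\nabla W(a^\pm)=0$ would otherwise contradict \eqref{w3bb}), to the heteroclinic orbit $e$, and — for part (iii) — to the bounded competitor $\tilde u$; all of them are thus valued in $\overline{B(0,R_0)}$.

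Next I would replace $W$ by a potential $\widehat W\in C^2(\R^m;\R)$ which is nonnegative, $\sigma$-invariant, has bounded second derivatives (hence $\nabla\widehat W$ is globally Lipschitz, so \eqref{w3bbis} holds), coincides with $W$ on an open neighbourhood of $\overline{B(0,R_0)}$, and satisfies $\nabla\widehat W(u)\cdot u>0$ for $|u|\geq R_0$ (so \eqref{w3bb} also holds for $\widehat W$). Such a $\widehat W$ is obtained by interpolating, via a cut-off function depending only on $|u|$ — which automatically preserves $\sigma$-invariance, since $\sigma$ is an orthogonal involution fixing $0$ and so $|\sigma u|=|u|$ — between $W$ near $\overline{B(0,R_0)}$ and a suitable $\sigma$-invariant convex potential for $|u|$ large; the monotonicity condition \eqref{w3bb} on the transition annulus is arranged by enlarging $R_0$ slightly if needed. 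Since $\widehat W\equiv W$ near $\overline{B(0,R_0)}$: $e$ still solves \eqref{ode0bis} with $\widehat W$; the renormalized potential $\mathcal W$ built from $\widehat W$ agrees on $\mathcal H_\sigma$ with the one built from $W$ whenever $\|h\|_{\mathcal H_\sigma}$ is small enough that $e+h$ stays in the good neighbourhood, so \eqref{nondegwbis} is inherited; and for \eqref{negwbis} I would take the $h_0$ given for $W$ and project $e+h_0$ onto $\overline{B(0,R_0)}$. The nearest-point projection $p$ is $1$-Lipschitz, does not increase $E_1$, commutes with $\sigma$ (again since $\sigma$ is orthogonal and fixes $0$), and equals the identity near $a^\pm$; hence $\widehat h_0:=p\circ(e+h_0)-e$ lies in $H^1(\R;\R^m)\cap\mathcal H_\sigma$, one has $(e+\widehat h_0)(\R)\subset\overline{B(0,R_0)}$, and $\mathcal W(\widehat h_0)=E_1(e+\widehat h_0)-E_1(e)\leq E_1(e+h_0)-E_1(e)=\mathcal W(h_0)<0$, the two energies being computed with $\widehat W=W$ on $\overline{B(0,R_0)}$. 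Thus $\widehat W$ satisfies \eqref{w13bis} and \eqref{ass2bis}.

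Then I would apply Theorem~\ref{th2ground} to $\widehat W$, obtaining $u\in C^2(\R^3;\R^m)$ solving $\Delta u=\nabla\widehat W(u)$ with properties (i), (ii) and (iii). By (ii) we have $u-e(x_3)\to0$, so $u$ is bounded; the maximum principle applied once more (through \eqref{w3bb} for $\widehat W$) gives $u(\R^3)\subset\overline{B(0,R_0)}$, on which $\nabla\widehat W=\nabla W$, so $u$ is in fact a solution of \eqref{system} for the original $W$, still satisfying (i) and (ii). For (iii), if $\tilde u$ is a bounded weak solution of \eqref{system} of the prescribed form, the maximum principle gives $\tilde u(\R^3)\subset\overline{B(0,R_0)}$, so $\tilde u$ also solves $\Delta\tilde u=\nabla\widehat W(\tilde u)$ and, since the integrand defining $\mathcal W$ only probes $W=\widehat W$ on $\overline{B(0,R_0)}$, one gets $\mathcal E_W(u)=\mathcal E_{\widehat W}(u)$ and $\mathcal E_W(\tilde u)=\mathcal E_{\widehat W}(\tilde u)$; the inequality $\mathcal E(u)\leq\mathcal E(\tilde u)$ then follows from Theorem~\ref{th2ground}(iii) applied to $\widehat W$.

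The only point requiring genuine care — and what distinguishes this argument from the $d\geq3$ case — is to keep every construction inside the $\sigma$-equivariant class: that the cut-off, being radial in $u$, preserves $\sigma$-invariance of $\widehat W$, and that the projection $p$ commutes with $\sigma$ so that $\widehat h_0$ remains in $\mathcal H_\sigma$. The remaining verifications are routine: finiteness of $\mathcal W(\widehat h_0)$ holds because $\widehat h_0$ differs from $h_0\in H^1(\R;\R^m)$ by a bounded, eventually vanishing perturbation; the monotonicity of $\widehat W$ on the transition annulus and the Lipschitz bound on $\nabla\widehat W$ are arranged by the choice of cut-off; and the rest of the proof is word-for-word the previous Corollary.
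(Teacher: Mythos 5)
Your proposal is correct and follows essentially the same route as the paper: the paper proves this corollary by referring back to the $d\geq 3$ case (maximum principle to confine $a^\pm$, $e$, $\tilde u$ and the modified-potential solution to $B(0,R_0)$, projection of $e+h_0$ to preserve \eqref{negwbis}, then modification of $W$ outside the ball and application of Theorem \ref{th2ground}). The additional care you take to keep the cut-off radial in $u$ and the projection $\sigma$-equivariant is exactly the point the paper leaves implicit in its ``as in the higher dimensional case'' remark, and your verification of it is sound.
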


\appendix
\section{Hamiltonian and Pohozaev identities}
Hamiltonian identities have originally been derived in \cite{gui} for solutions of system \eqref{system0} satisfying some asymptotic conditions. Here, we establish a version of these identities for the solutions of section \ref{sec:application}. They are used to prove the ground state property (iii) of Theorems \ref{th1ground} and \ref{th2ground}.

\begin{lemma}\label{hamid} Let $d\geq 2$, and set $x=(x_1,\ldots,x_{d+1})=(y,x_{d+1})$, with $y=(x_1,\ldots,x_d)$. Under the assumptions of Theorem \ref{th1ground} when $d\geq 3$ (respectively Theorem \ref{th2ground} when $d=2$), let $\tilde u$ be a weak solution to system \eqref{system} such that $\tilde u(x)=e(x_{d+1})+\tilde \xi(x)$, with
\begin{equation*}
  \begin{cases}
    \tilde\Xi\in\mathcal D^{1,2}_{rad}(\R^d;\mathcal H), \ \tilde\Xi \not\equiv 0, \text{ and } \mathcal W(\tilde\Xi)\in L^1(\R^d), &\text{ when } d\geq 3, \\
    \tilde\Xi\in H^{1,2}_{rad}(\R^2;\mathcal H_\sigma), \ \tilde\Xi \not\equiv 0, \text{ and } \mathcal W(\tilde\Xi)\in L^1(\R^2), &\text{ when } d=2.
  \end{cases}
\end{equation*}
Then, for every $i=1,\ldots, d$, there exists a negligible set $N_i\subset\R$ such that
\begin{align}\label{hamground}
\int_{y_i=\lambda}\Big(\frac{1}{2}\|\tilde \Xi_{y_i}(y)\|^2_{\mathcal H}-\frac{1}{2}\sum_{j\neq i}\|\tilde \Xi_{y_j}(y)\|^2_{\mathcal H}-\mathcal W(\tilde\Xi(y))\Big)dy_1\ldots dy_{i-1}dy_{i+1}\ldots dy_d=0
\end{align}
holds for $\lambda\in\R\setminus N_i$. Consequently, we also have
\begin{align}\label{hamground2}
\int_{\R^d}\Big(\frac{1}{2}\|\tilde \Xi_{y_i}(y)\|^2_{\mathcal H}-\frac{1}{2}\sum_{j\neq i}\|\tilde \Xi_{y_j}(y)\|^2_{\mathcal H}-\mathcal W(\tilde\Xi(y))\Big)dy=0, \ \forall i=1,\ldots d,
\end{align}
as well as
\begin{align}\label{hamground3}
\int_{\R^{d}}\Big(\big(1-\frac{d}{2}\big)\|\nabla\tilde \Xi(y)\|^2_{\mathcal H^d}-d\mathcal W(\tilde\Xi(y))\Big)dy=0.
\end{align}
Moreover,
\begin{align}\label{hamground4}
\mathcal E(\tilde u)=\frac{1}{d}\int_{\R^{d}}\|\nabla\tilde \Xi(y)\|^2_{\mathcal H^d}, \text{ and if  $d\geq3$, }\mathcal E(\tilde u)=-\frac{2}{d-2}\int_{\R^d}\mathcal W(\tilde\Xi(y))dy,
\end{align}
where $\mathcal E$ is defined in \eqref{renormene}.
\end{lemma}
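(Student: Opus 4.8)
The plan is to recognize \eqref{hamground} as a Hamiltonian identity in the spirit of \cite{gui} for the map $\tilde\xi:=\tilde u-e$, and then to obtain \eqref{hamground2}--\eqref{hamground4} from it by integration and elementary algebra. First I would record the properties of $\tilde\xi$ that are needed. Since $\tilde u$ solves \eqref{system} weakly and $e''=\nabla W(e)$ depends only on $x_{d+1}$, the map $\tilde\xi$ solves weakly in $\R^{d+1}$ the equation $\Delta\tilde\xi(x)=\nabla_h f(x_{d+1},\tilde\xi(x))$ with $\nabla_h f(t,h)=\nabla W(e(t)+h)-\nabla W(e(t))$. Arguing exactly as in Lemmas \ref{recover} and \ref{AS} (using the radial lemmas of Section \ref{sec:groundH} to get $\|\tilde\Xi(y)\|_{\mathcal H}\to0$ as $|y|\to\infty$, the global Lipschitz bound \eqref{w3b}, resp.\ \eqref{w3bbis}, and elliptic regularity), one obtains $\tilde\xi\in C^2(\R^{d+1};\R^m)$ with $\tilde\xi(x),\nabla\tilde\xi(x)\to0$ as $|x|\to\infty$; moreover $|\nabla_y\tilde\xi|^2\in L^1(\R^{d+1})$, for a.e.\ $y$ one has $\int_\R\bigl(\tfrac12|\tilde\xi_{x_{d+1}}|^2+f(x_{d+1},\tilde\xi)\bigr)dx_{d+1}=\mathcal W(\tilde\Xi(y))$ with $\int_\R|f(x_{d+1},\tilde\xi(y,\cdot))|dx_{d+1}\le L\|\tilde\Xi(y)\|^2_{\mathcal H}<\infty$, and by hypothesis $\mathcal W(\tilde\Xi)\in L^1(\R^d)$.

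For the core step I would introduce, for the classical solution $\tilde\xi$ of $\Delta\tilde\xi=\nabla_h f(x_{d+1},\cdot)$, the stress--energy tensor $T_{k\ell}=\tilde\xi_{x_k}\cdot\tilde\xi_{x_\ell}-\delta_{k\ell}\bigl(\tfrac12|\nabla\tilde\xi|^2+f(x_{d+1},\tilde\xi)\bigr)$. Because $f$ has no explicit dependence on $x_1,\dots,x_d$, a direct computation using the equation gives $\sum_{\ell=1}^{d+1}\partial_{x_\ell}T_{i\ell}=0$ for every $i\in\{1,\dots,d\}$. Integrating this divergence-free relation over a slab $\{a<x_i<b\}$ and letting a cutoff in the remaining variables tend to infinity (the lateral flux being controlled by the decay of $\tilde\xi,\nabla\tilde\xi$ and the integrability recorded above, as in \cite{gui} and \cite[Lemma 4.1]{smy2}) shows that $\lambda\mapsto I_i(\lambda):=\int_{\{x_i=\lambda\}}T_{ii}\,dx_1\ldots dx_{i-1}dx_{i+1}\ldots dx_{d+1}$ is, for a.e.\ $\lambda$, equal to a constant $c_i$. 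Performing the $x_{d+1}$-integration and using the identity for $\mathcal W(\tilde\Xi(y))$ above, one sees that $I_i(\lambda)$ is exactly the left-hand side of \eqref{hamground}, so it only remains to show $c_i=0$. For this, note that $\lambda\mapsto\int_{\{y_i=\lambda\}}\bigl(\tfrac12\|\nabla_y\tilde\Xi\|^2_{\mathcal H^d}+|\mathcal W(\tilde\Xi)|\bigr)dy_1\ldots dy_{i-1}dy_{i+1}\ldots dy_d$ belongs to $L^1(\R)$, since $\|\nabla_y\tilde\Xi\|^2_{\mathcal H^d}$ and $\mathcal W(\tilde\Xi)$ lie in $L^1(\R^d)$; hence there is a sequence $\lambda_n\to\infty$, avoiding the negligible set where $I_i\neq c_i$, along which this integral tends to $0$, and then $|c_i|=|I_i(\lambda_n)|\to0$. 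This proves \eqref{hamground}; in the case $d=2$ the same argument is run in $\mathcal H_\sigma$ with the equivariant class of Theorem \ref{th2ground}.

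Finally, \eqref{hamground2} follows by integrating \eqref{hamground} in $\lambda=y_i$ over $\R$ and applying Fubini (legitimate since $\|\nabla_y\tilde\Xi\|^2_{\mathcal H^d},\mathcal W(\tilde\Xi)\in L^1(\R^d)$). Summing \eqref{hamground2} over $i=1,\dots,d$ and using $\sum_{i=1}^d\sum_{j\neq i}\|\tilde\Xi_{y_j}\|^2_{\mathcal H}=(d-1)\|\nabla\tilde\Xi\|^2_{\mathcal H^d}$ gives $\bigl(\tfrac12-\tfrac{d-1}{2}\bigr)\int_{\R^d}\|\nabla\tilde\Xi\|^2_{\mathcal H^d}-d\int_{\R^d}\mathcal W(\tilde\Xi)=0$, which is \eqref{hamground3}. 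Substituting $\int_{\R^d}\mathcal W(\tilde\Xi)=\tfrac{2-d}{2d}\int_{\R^d}\|\nabla\tilde\Xi\|^2_{\mathcal H^d}$ into the definition \eqref{renormene} of $\mathcal E(\tilde u)$ yields $\mathcal E(\tilde u)=\tfrac1d\int_{\R^d}\|\nabla\tilde\Xi\|^2_{\mathcal H^d}$, and when $d\ge3$, solving \eqref{hamground3} the other way gives $\int_{\R^d}\|\nabla\tilde\Xi\|^2_{\mathcal H^d}=-\tfrac{2d}{d-2}\int_{\R^d}\mathcal W(\tilde\Xi)$, whence $\mathcal E(\tilde u)=-\tfrac{2}{d-2}\int_{\R^d}\mathcal W(\tilde\Xi)$, which is \eqref{hamground4}. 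I expect the main obstacle to be the core step: justifying rigorously that $I_i(\lambda)$ is independent of $\lambda$, i.e.\ making precise the vanishing of the lateral flux of $T$ along the hyperplanes $\{x_i=\lambda\}$, which requires combining the decay estimates with a careful cutoff argument in the spirit of \cite{gui,smy2}.
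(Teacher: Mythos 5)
Your proposal is correct and follows essentially the same route as the paper: a stress--energy tensor, the divergence theorem on boxes with the $x_{d+1}$-flux and the lateral fluxes killed by the decay of $\tilde\xi,\nabla\tilde\xi$ and the integrability of $\|\nabla\tilde\Xi\|^2_{\mathcal H^d}$ and $\mathcal W(\tilde\Xi)$ (letting $L\to\infty$ first, then $R\to\infty$ along a subsequence), then the observation that the constant, integrable slice function must vanish, and the same algebra for \eqref{hamground2}--\eqref{hamground4}. The only, minor, deviation is that you renormalize at the level of the tensor (built from $\tilde\xi$ and $f(x_{d+1},\cdot)$, divergence-free in the rows $i\leq d$), whereas the paper uses the tensor \eqref{tensor} of $\tilde u$ with $W$ and obtains the same renormalized slice integrand via an integration by parts in $x_{d+1}$ and cancellation of the $|e'|^2$ contributions between the faces $x_i=\lambda^\pm$.
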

\begin{proof}
By reproducing the arguments of Lemmas \ref{recover} and \ref{AS} when $d\geq 3$ (respectively Lemmas \ref{recoverbis} and \ref{AS} when $d=2$), we first notice that $\tilde u$ is a classical solution satisfying $\lim_{|x|\to\infty}\tilde \xi(x)=0$, and $\lim_{|x|\to\infty}\nabla \tilde \xi(x)=0$. In what follows, we still denote by $e$, the map $(x_1,\ldots, x_{d+1})\mapsto e(x_{d+1})$. Next, we consider the stress-energy tensor (cf. \cite[Chapter 3]{book}) associated to solutions of system \eqref{system}:
\begin{equation}\label{tensor}
  T_{i, j}=\tilde u_{x_i}\cdot \tilde u_{x_j}-\delta_{ij}\Big(\frac{1}{2}|\nabla \tilde u|^2+W(\tilde u)\Big), \ 1\leq i,j\leq d+1,
\end{equation}
and the divergence free vector fields $A_i(x)=(T_{i,1},\ldots, T_{i,d+1})\in\R^{d+1}$, ($i=1,\ldots,d$).
Given $i\in\{1,\ldots,d\}$, there exits a negligible set $N_i\subset\R$ such that the functions $\|\nabla \tilde \Xi(y)\|^2_{\mathcal H^d}$ and $\mathcal W(\tilde \Xi(y))$ are integrable on the hyperplanes $y_i=\lambda$, for every $\lambda\in\R\setminus N_i$. By applying the divergence theorem to the vector field $A_i$ in the box
$$B=[-R,R]^{i-1}\times [\lambda^-\lambda^+]\times[-R,R]^{d-i}\times [-L,L],$$ with $\lambda^\pm\in\R\setminus N_i$, we obtain the equation $$\sum_{j=1}^{d+1} I_j^+-\sum
_{j=1}^{d+1} I_j^-=0,$$ where
\begin{equation*}
\begin{cases}
\partial B=\cup_{j=1}^{d+1} B_{j}^\pm, \\
B_i^\pm=\{x\in \R^{d+1}: x_i=\lambda^\pm,  | x_{d+1}|\leq L, |x_k|\leq R, \forall k \notin \{i,d+1\}\},\\
B_{d+1}^\pm=\{ x\in \R^{d+1}: x_i\in [\lambda^-,\lambda^+], x_{d+1}=\pm L, |x_k|\leq R, \forall k \notin \{i,d+1\}\},\\
\text{while for } j\notin\{i,d+1\},\\
B_j^\pm=\{ x\in \R^{d+1}: x_j =\pm R,\, x_i\in [\lambda^-,\lambda^+], |x_{d+1}|\leq L, |x_k|\leq R, \forall k \notin \{i,j,d+1\}\},
\end{cases}
\end{equation*}
and
\begin{equation*}
I_{i}^\pm=\int_{B_i^\pm}  \big(\frac{1}{2}|\tilde\xi_{x_i}|^2-\big(\sum_{\{1,\ldots,d+1\}\ni l\neq i}\frac{1}{2}|\tilde\xi_{x_l}|^2\big)- \frac{1}{2}|e_{x_{d+1}}|^2-e_{x_{d+1}}\cdot \tilde\xi_{x_{d+1}}-W(e+\tilde\xi)\big),
\end{equation*}

\begin{equation*}
\forall j\notin\{i,d+1\}: \ I_{j}^\pm=\int_{B_j^\pm}\tilde\xi_{x_i}\cdot\tilde\xi_{x_j},
\end{equation*}

\begin{equation*}
I_{d+1}^\pm=\int_{B_{d+1}^\pm}\tilde\xi_{x_i}\cdot(e_{x_{d+1}}+\tilde\xi_{x_{d+1}}).
\end{equation*}
It is clear that as $L\to\infty$, and $R$ remains fixed, $\lim_{L\to\infty}I_{d+1}^\pm=0$. On the other hand, after an integration by parts, we obtain
\begin{multline*}
I_{i}^\pm=\int_{B_i^\pm} \big( \frac{1}{2}|\tilde\xi_{x_i}|^2- \big(\sum_{\{1,\ldots,d+1\}\ni l\neq i}\frac{1}{2}|\tilde\xi_{x_l}|^2\big)-f(x_{d+1},\tilde\xi) -|e_{x_{d+1}}|^2\big)\\
-\int_{x_i=\lambda^\pm,   x_{d+1}= L, |x_k|\leq R, \forall k \notin \{i,d+1\}}\tilde\xi\cdot e_{x_{d+1}}+\int_{x_i=\lambda^\pm,   x_{d+1}=- L, |x_k|\leq R, \forall k \notin \{i,d+1\}}\tilde\xi\cdot e_{x_{d+1}},
\end{multline*}
and letting $L\to\infty$, one can see that
\begin{multline*}
\lim_{L\to\infty}(I_{i}^+-I_i^-)=\int_{y_i=\lambda^+,  |y_k|\leq R, \forall k \neq i} \big( \frac{1}{2}\|\tilde\Xi_{y_i}(y)\|_{\mathcal H}^2- \frac{1}{2}\sum_{\{1,\ldots,d\} \ni k\neq i}\|\tilde\Xi_{y_k}(y)\|_{\mathcal H}^2-\mathcal W(\tilde\Xi(y))\big) \\
-\int_{y_i=\lambda^-,  |y_k|\leq R, \forall k \neq i} \big( \frac{1}{2}\|\tilde\Xi_{y_i}(y)\|_{\mathcal H}^2- \frac{1}{2}\sum_{\{1,\ldots,d\} \ni k\neq i}\|\tilde\Xi_{y_k}(y)\|_{\mathcal H}^2-\mathcal W(\tilde\Xi(y))\big) .
\end{multline*}
while
\begin{equation*}
\forall l\in\{1,\ldots d\}\setminus \{i\}: \ \lim_{L\to\infty} I_{l}^\pm=\int_{y_l =\pm R,\, y_i\in [\lambda^-,\lambda^+], |y_k|\leq R, \forall k \notin \{i,l\}}\big(\tilde\Xi_{y_i}(y)\cdot\tilde\Xi_{y_l}(y)\big).
\end{equation*}
Finally, since $\|\nabla \tilde \Xi\|_{\mathcal H^d}^2\in L^1(\R^d)$, and $\mathcal W(\tilde \Xi )\in L^1(\R^d)$, we conclude letting $R\to\infty$ along a subsequence that
\begin{equation*}
\lim_{R\to\infty} \int_{y_l =\pm R,\, y_i\in [\lambda^-,\lambda^+], |y_k|\leq R, \forall k \notin \{i,l\}}\big(\tilde\Xi_{y_i}(y)\cdot\tilde\Xi_{y_l}(y)\big)=0,
\end{equation*}
while
\begin{multline*}
\lim_{R\to\infty}\int_{y_i=\lambda^\pm,  |y_k|\leq R, \forall k \neq i} \big( \frac{1}{2}\|\tilde\Xi_{y_i}(y)\|_{\mathcal H}^2- \frac{1}{2}\sum_{\{1,\ldots,d\}\ni k\neq i}\|\tilde\Xi_{y_k}(y)\|_{\mathcal H}^2-\mathcal W(\tilde\Xi(y))\big) \\
=\int_{y_i=\lambda^\pm} \big( \frac{1}{2}\|\tilde\Xi_{y_i}(y)\|_{\mathcal H}^2- \frac{1}{2}\sum_{\{1,\ldots,d\}\ni k\neq i}\|\tilde\Xi_{y_k}(y)\|_{\mathcal H}^2-\mathcal W(\tilde\Xi(y))\big) .
\end{multline*}
This proves that for every $\lambda^-,\lambda^+\in\R\setminus N_i$, we have
\begin{multline*}
\int_{y_i=\lambda^-}\Big(\frac{1}{2}\|\tilde \Xi_{y_i}(y)\|^2_{\mathcal H}-\frac{1}{2}\sum_{j\neq i}\|\tilde \Xi_{y_j}(y)\|^2_{\mathcal H}-\mathcal W(\tilde\Xi(y))\Big)\\
=\int_{y_i=\lambda^+}\Big(\frac{1}{2}\|\tilde \Xi_{y_i}(y)\|^2_{\mathcal H}-\frac{1}{2}\sum_{j\neq i}\|\tilde \Xi_{y_j}(y)\|^2_{\mathcal H}-\mathcal W(\tilde\Xi(y))\Big),
\end{multline*}
and using again the integrability of the functions $\|\nabla \tilde \Xi(y)\|^2_{\mathcal H^d}$ and $\mathcal W(\tilde \Xi(y))$, it follows that
\begin{multline}\label{hamin}
\forall \lambda\in\R\setminus N_i: \ \int_{y_i=\lambda}\Big(\frac{1}{2}\|\tilde \Xi_{y_i}(y)\|^2_{\mathcal H}-\frac{1}{2}\sum_{j\neq i}\|\tilde \Xi_{y_j}(y)\|^2_{\mathcal H}-\mathcal W(\tilde\Xi(y))\Big)=0.
\end{multline}
Next, by integrating \eqref{hamin} with respect to $y_i\in\R$, we derive \eqref{hamground2}. Finally, the sum of equations \eqref{hamground2} for $i=1,\ldots,d$ gives \eqref{hamground3}. \qed
\end{proof}

\section*{Acknowledgements}
P. S. would like to thank Jacopo Schino for introducing him to some of the techniques used in this paper. This research project is implemented within the framework of H.F.R.I call
``Basic research Financing (Horizontal support of all Sciences)'' under the National Recovery and Resilience Plan ``Greece 2.0'' funded by the European Union - NextGenerationEU (H.F.R.I. Project Number: 016097).


\bibliographystyle{elsarticle-num} 
\bibliography{yourbibfile} 

\begin{thebibliography}{99}


\bibitem{abg}
Alama, S., Bronsard, L., Gui, C.:
Stationary layered solutions in $\R^2$ for an Allen-Cahn system with multiple well potential.
Calc.\ Var. \textbf{5} No.~4, 359--390 (1997)


\bibitem{alessio}
Alessio, F.: Stationary layered solutions for a system of Allen-Cahn type equations.
Indiana Univ. Math. J. \textbf{62}, 1535--1564 (2013)


\bibitem{book}
Alikakos, N. D., Fusco, G., Smyrnelis, P.: Elliptic systems of phase transition type. Progress in Nonlinear Differential Equations and Their Applications \textbf{91}, Springer-Birkh{\"a}user (2018).



\bibitem{alves} Alves, C.,  Souto, M., Montenegro, M. : Existence of a ground state for a nonlinear scalar field equation with critical growth, Calc. Var. {\bf 43}:537--554,  (2012)


\bibitem{antonop}
Antonopoulos, P., Smyrnelis, P.: On minimizers of the Hamiltonian system $u''=\nabla W(u)$, and on the existence of heteroclinic, homoclinic and periodic orbits. Indiana Univ.\ Math.\ J. \textbf{65} No.~5, 1503--1524 (2016)


\bibitem{BGK} Berestycki, H.,  Gallou{\"e}t, T., Kavian, O. :  Nonlinear scalar field equations in the plane, C. R. Acad. Sc. Paris {\bf 297} S\'erie I, 307--310 (1983)
\bibitem{BerLionsI} Berestycki, H., Lions, P.L.: Nonlinear scalar field equations, I. Existence of a ground state, Arch. Ration. Mech. Anal. {\bf 82} no. 4, 313--345, (1983),


\bibitem{brez}
Brezis, H.: Functional Analysis, Sobolev Spaces and Partial Differential Equations. Universitext, Springer (2011)


\bibitem{brezis}
Brezis, H., Lieb, E. H. : Minimum Action Solutions of Some Vector Field Equations. Commun. Math. Phys. \textbf{96}, 97--113 (1984)




\bibitem{clap}
Clapp, M., Pistoia, A., Szulkin, A.: Ground states of critical and supercritical problems of Brezis-Nirenberg type. Annali di Matematica (2016) \textbf{195}:1787--1802

\bibitem{clerc}
 Clerc, M. G.,  Kowalczyk, M., Smyrnelis, P.: The connecting solution of the Painlev\'e phase transition model. Ann. Sc. Norm. Super. Pisa Cl. Sci. (5)
Vol. \textbf{XXI}, 977--998 (2020)

\bibitem{gui}
Gui, C.: Hamiltonian identities for partial differential equations.
Journal of Functional Analysis \textbf{254} No.~4, 904--933 (2008)


\bibitem{jen}
Jendrej, J., Smyrnelis, P.: Nondegeneracy of heteroclinic orbits for a class of potentials on the
plane. Applied Mathematics Letters 124 (2022) 107681

\bibitem{kreuter}
Kreuter, M.: Sobolev spaces of vector-valued functions. Master thesis, Ulm University, Faculty of Mathematics and Economics (2015)

\bibitem{maris}  Maris, M.: On the symmetry of minimizers, Arch. Rational.Mech. Anal. {\bf 192} (2009) 311--330.

\bibitem{med1}
Mederski, J.: Nonradial solutions of nonlinear scalar field equations. Nonlinearity \textbf{33}, No.
12,  (2020): 6349--80, 0951--7715

\bibitem{med2}
Mederski, J.: General class of optimal Sobolev inequalities and nonlinear scalar field
equations. J. Differential Equations \textbf{281} (2021): 411--41, 0022--0396


\bibitem{med3}
Mederski, J., Siemianowski, J.: Biharmonic nonlinear scalar field equations. International Mathematical Research Notices, Vol \textbf{2023}, No.~23, 19963--19995

\bibitem{palais}
Palais, R. S.: The principle of symmetric criticality. Commun. Math. Phys. \textbf{69} No.~1, 19--30 (1979)




\bibitem{scha}
Schatzman, M.:
Asymmetric heteroclinic double layers.
Control Optim. Calc. Var. \textbf{8} (A tribute to J. L. Lions), 965--1005 (electronic) (2002)

\bibitem{ps}
Smyrnelis, P.: Gradient estimates for semilinear elliptic systems and other related results. Proc. Roy. Soc. Edinburgh Sect. A \textbf{145}, No. 6, 1313--1330 (2015)


\bibitem{ps2}
Smyrnelis, P.: Connecting orbits in Hilbert spaces and applicatons to P.D.E. Comm. Pure Appl. Anal. \textbf{19}, No.~5 (May 2020) 2797--2818, doi: 10.3934/cpaa.2020122





\bibitem{filament}
Smyrnelis, P.: Vortex filament solutions in the Ginzburg-Landau-Painlev\'e theory of phase transition. J. Math. Pures Appl. \textbf{156}, 328--350 (2021)

\bibitem{doublehet}
Smyrnelis, P.: Double layered solutions to the extended Fisher-Kolmogorov P.D.E.  Nonlinear Differ. Equ. Appl., \textbf{28}:48, 22 pages (2021)

\bibitem{smy2}
Smyrnelis, P.: A comparison principle for vector valued minimizers of semilinear elliptic energy, with application to dead cores. Indiana Univ. Math. J., Vol. \textbf{70}, No. 5, 1745--1768 (2021)



\bibitem{willem} Willem, M.: Minimax Theorems, Progess in Nonlinear Differential Equations and their Applications \textbf{24},  Birkh{\"a}user, Boston, MA (1996)
\end{thebibliography}

\end{document}